\documentclass[11pt,leqno]{amsart}

\usepackage[top=2.4 cm,bottom=1.9cm,left=2.5 cm,right=2 cm]{geometry}
\usepackage[T1]{fontenc}

\usepackage{bbm}
\usepackage{esint}
\usepackage{graphicx}
\usepackage{color}
\usepackage{amsfonts,amsmath,amssymb,mathrsfs}
\usepackage{psfrag}
\usepackage{enumitem}
\newcounter{stepnb}

\usepackage{hyperref}\hypersetup{colorlinks=true,linkcolor=blue,citecolor=blue}

\newtheorem{theorem}{Theorem}
\newtheorem{lemma}[theorem]{Lemma}
\newtheorem{proposition}[theorem]{Proposition}

\theoremstyle{plain} \newtheorem{corollary}[theorem]{Corollary}

\newtheorem{definition}{Definition}[section]
\theoremstyle{remark}

\allowdisplaybreaks
\theoremstyle{plain} 

\newcommand{\N}{\mathbb{N}}

\newcommand{\R}{\mathbb{R}}
\newcommand{\Q}{\mathbb{Q}}
\newcommand{\M}{{\mathscr M}}

\newcommand{\G}{{\mathcal G}}

\newcommand{\Id}{{\mathrm I}}

\newcommand{\AG}{\mathrm{AG}}
\renewcommand{\H}{{\mathscr H}}
\renewcommand{\L}{{\mathscr L}}

\renewcommand{\S}{\mathbb{S}}
\renewcommand{\P}{\mathcal{P}}

\DeclareMathOperator{\BV}{BV}

\newcommand{\e}{\varepsilon}

\newcommand{\TV}{\text{\rm TotVar}}

%
%

\newcommand{\be}{\begin{equation}}
\newcommand{\eq}{\end{equation}}

\renewcommand{\div}{{\rm div}\,}

\newcommand{\loc}{\mathrm{loc}}

\newcommand{\ac}{\mathrm{ac}}
\newcommand{\n}{\mathbf{n}}

\title{The rectifiability of the entropy defect measure for Burgers equation}
\author[E.~Marconi]{Elio Marconi}
\address{E.M. Departement Mathematik und Informatik,
Universit\"at Basel, Spiegelgasse 1, CH-4051 Basel, Switzerland.}
\email{elio.marconi@unibas.ch}
\thanks{The author acknowledges ERC Starting Grant 676675 FLIRT and Xavier Lamy for several discussions on this topic.}

%
%

\begin{document}
\maketitle

\begin{abstract}
We consider bounded weak solutions to the Burgers equation for which every entropy dissipation is representable by a measure and we prove that all these measures
are concentrated on the graphs of countably many Lipschitz curves.
The main tool is the Lagrangian representation, which is an extension of the method of characteristics to the non-smooth setting.
\end{abstract}

\section{Introduction}
We consider the Cauchy problem for the Burgers equation:
\begin{equation}\label{E_Burgers}
\begin{cases}
&u_t + \left(\frac{u^2}{2}\right)_x=0, \qquad (t,x) \in [0,T]\times \R\\
&u(0,\cdot)=u_0,
\end{cases}
\end{equation}
with $T>0$ and $u_0 \in L^\infty(\R)$. It is known since the early stages \cite{Kruzhkov_contraction} of the theory of scalar conservation laws that the Cauchy problem \eqref{E_Burgers} is well-posed in the setting of bounded 
entropy solutions, namely weak solutions with the following additional constraint: 
for every smooth convex entropy $\eta:\R\to \R$ and relative flux $q:\R\to \R$ defined up to a constant by $q'(v)=\eta'(v)v$, it holds
\begin{equation*}
\mu_\eta:=\partial_t \eta(u) + \partial_x q(u) \le 0 
\end{equation*}
in the sense of distributions.
The celebrated Oleinik one-sided Lipschitz estimate \cite{Oleinik_translation} implies that bounded entropy solutions to \eqref{E_Burgers} belong to $\BV_\loc((0,T]\times \R)$ and the structure of the
solution is well described by means of the theory of $\BV$ functions.

In this paper we investigate the structure of more general weak solutions introduced in the following definition:
\begin{definition}
We say that $u$ is a bounded weak solutions to \eqref{E_Burgers} with \emph{finite entropy production} if $u \in C^0([0,T];L^1(\R)) \cap L^\infty([0,T]\times \R)$ and for every convex 
entropy $\eta$ and corresponding flux $q$
\begin{equation*}
\mu_\eta:= \partial_t \eta(u) + \partial_x q(u) \in \M([0,T]\times \R),
\end{equation*}
where $\M$ denotes the set of finite Radon measures.
\end{definition}
The interest toward these solutions is clearly motivated in \cite{LO_Burgers}.
We report the main motivations here for completeness: weak solutions with finite entropy production arise as domain of the $\Gamma$-limit of a sequence of functionals
introduced in \cite{bertini_al} to study large deviations principles for stochastic approximations of entropy solutions (see also \cite{Mariani_large}).
Moreover there is a strong analogy between the solutions introduced above and the weak solutions of the eikonal equation 
\begin{equation}\label{E_eikonal}
|\nabla \phi|=1
\end{equation}
in the plane arising as domain of the 
$\Gamma$-limit as $\e\to 0$ of the Aviles-Giga functionals:
\begin{equation*}
\AG_\e(\phi):= \int \left(\frac{\e}{2}|\nabla^2\phi|^2 + \frac{1}{2\e}(|\nabla \phi|^2-1)^2\right).
\end{equation*}
The same analogy holds for a very related model about thin ferro-magnetic films studied in \cite{RS_magnetism,RS_magnetism2}, see also \cite{Riviere_parois}.
At a formal level the link between conservation laws and the eikonal equation is the following (see for example \cite{DLO_JEMS}): 
setting $m=\nabla^\perp \phi$ it holds $\div \,m=0$. Moreover we can impose the constraint $|m|=1$ introducing a phase $\theta \in \S^1$ so that $m=(\cos \theta, \sin \theta)$ and 
the eikonal equation reduces to the $\S^1$-valued scalar conservation law
\begin{equation*}
\partial_{x_1}\cos \theta + \partial_{x_2}\sin \theta =0.
\end{equation*}
This analogy has been pushed much further when the notion of entropy has been transferred from the theory of conservation laws to the eikonal equation
\cite{JK_entropies,ADLM_eikonal,DMKO_compactness} and this allowed to provide a kinetic formulation for \eqref{E_eikonal}, see for example \cite{JP_kinetic}.
The kinetic formulation for scalar conservation laws has been introduced in \cite{LPT_kinetic} in the context of entropy solutions: in our setting it reads as follows 
(see for example \cite{DLW_structure}): given a bounded weak solution to \eqref{E_Burgers} with finite entropy production $u$ there exists a Radon measure 
$\mu \in \M([0,T]\times \R \times \R)$ such that
\begin{equation*}
\partial_t\chi + v \partial_x \chi = \partial_v \mu, \qquad \mbox{where} \qquad 
\chi (t,x,v):= 
\begin{cases} 
1 & \mbox{if }0 < v \le u(t,x), \\
-1 & \mbox{if }u(t,x)\le v <0, \\
0 & \mbox{otherwise}.
\end{cases}
\end{equation*}
The kinetic measure $\mu$ encodes all the entropy production measures $\mu_\eta$ by means of the following formula:
\begin{equation}\label{E_all_entropies}
\langle \mu_\eta, \phi\rangle = \int_{[0,T]\times \R \times \R} \eta''(v)\phi(t,x) d\mu.
\end{equation}
In particular we will consider the measure
\begin{equation}\label{E_nu}
\nu:= (p_{t,x})_\sharp |\mu| = \sup_{\|\eta''\|_{L^\infty}\le 1}\mu_\eta, 
\end{equation}
where $p_{t,x}:[0,T]\times \R^2\to [0,T]\times \R$ is the standard projection on the first two components and the wedge denotes the supremum operator in the set of measures.
The equality in \eqref{E_nu} follows from \eqref{E_all_entropies} (see for example \cite{LO_Burgers}).

In contrast with the case of entropy solutions, bounded weak solutions with finite entropy production are not locally $\BV$: 
they belong to $B^{1/3,3}_{\infty, \loc}$ \cite{GP_optimal} and this regularity is optimal  \cite{DLW_averaging}.
The same result has been established recently in the case of the eikonal equation \cite{GL_eikonal}.
Nevertheless these solutions share several fine properties with functions of bounded variation: in \cite{Lecumberry_PhD,DLW_structure} it has been proved that the set
\begin{equation}\label{E_J}
J := \left\{(t,x)\in [0,T]\times \R: \limsup_{r\to 0}\frac{\nu(B_r(x))}{r} >0\right\}
\end{equation}
is $\H^1$-rectifiable and it admits strong traces on both sides. In particular for every entropy $\eta$ it holds
\begin{equation}\label{E_rect}
\mu_\eta\llcorner J = \big((\eta(u^+)-\eta(u^-))\n_t + (q(u^+)-q(u^-))\n_x)\big)\H^1 \llcorner J,
\end{equation}
where $\n=(\n_t,\n_x)$ is the normal to $J$ and $u^\pm$ are the traces.
Moreover every point in $J^c$ is a point of vanishing mean oscillation of $u$.
Recently, it has been proved in \cite{LO_Burgers} that the set $S$ of non-Lebesgue points of $u$ has Hausdorff dimension at most 1, almost filling the gap with the case of BV solutions,
where we would have $\H^1(J\setminus S)=0$ (see also \cite{M_Lebesgue} for the same result for more general conservation laws and \cite{Silvestre_CL} for the case of entropy solutions, where every point in $J^c$ is actually a continuity point of $u$).
Another relevant property of BV solutions is that for every smooth entropy $\eta$ the measure $\mu_\eta$ is concentrated on $J$.
This result has been proved also for entropy solutions for more general conservation laws in one space dimensions \cite{DLR_dissipation, BM_structure}, for continuous entropy solutions in several space 
dimensions \cite{Silvestre_CL, BBM_multid} and for continuous solutions to general conservation laws in one space dimension \cite{Dafermos_continuous}.
In both settings of conservation laws and of the eikonal equation, the proof of this concentration property is considered as a main step to prove the $\Gamma$-convergence
of the families of functionals discussed above, see  \cite{bertini_al,Lecumberry_magnetic}.

The main result of this work establishes that this property holds for bounded weak solutions to the Burgers equation with finite entropy production:
\begin{theorem}\label{T_main}
Let $u$ be a bounded weak solution to \eqref{E_Burgers} with finite entropy production. 
Then there exist countably many Lipschitz curves $\gamma_i:[0,T]\to \R$ such that $\nu$ (and therefore every entropy dissipation measure $\mu_\eta$)
is concentrated on
\begin{equation*}
J':= \bigcup_{i\in \N}\{(t,x)\in [0,T]\times \R: x=\gamma_i(t)\}.
\end{equation*}
\end{theorem}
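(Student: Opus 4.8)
The plan relies on the Lagrangian representation of $u$, which furnishes a finite positive measure $\omega$ on a space of characteristic curves: each curve is a segment $t\mapsto(t,\gamma(t))$ defined on a maximal time interval $[\sigma_\gamma,\tau_\gamma]\subseteq[0,T]$, carrying a constant kinetic level $w_\gamma\in\R$ along which $\dot\gamma\equiv w_\gamma$, so that $|w_\gamma|\le\|u\|_{L^\infty}=:M$; the measure $\omega$ reconstructs $|\chi|$ as an occupation measure and --- the decisive point --- the kinetic measure $\mu$, hence every $\mu_\eta$ through \eqref{E_all_entropies}, is encoded by the curves that are created or destroyed at an interior time. Denoting by $\mathrm{ev}^\sigma(\gamma)=(\sigma_\gamma,\gamma(\sigma_\gamma))$ and $\mathrm{ev}^\tau(\gamma)=(\tau_\gamma,\gamma(\tau_\gamma))$ the endpoint maps and by $\Gamma^{\mathrm{int}}$ the set of curves with an endpoint in $(0,T)\times\R$, one obtains a bound of the form $\nu=(p_{t,x})_\sharp|\mu|\le C(M)\big[(\mathrm{ev}^\sigma)_\sharp(\omega\llcorner\Gamma^{\mathrm{int}})+(\mathrm{ev}^\tau)_\sharp(\omega\llcorner\Gamma^{\mathrm{int}})\big]$. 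It is therefore enough to cover, up to a $\nu$-null set, the set $\Sigma$ of interior endpoints of $\omega$-almost every curve by countably many Lipschitz graphs over $t$; moreover every such graph has Lipschitz constant $\le M$ a priori, since a characteristic terminating (or originating) at a point forces there a Rankine--Hugoniot discontinuity, whose speed $\tfrac12(u^++u^-)$ lies in $[-M,M]$.

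First I would dispose of the part of $\nu$ carried by the set $J$ of \eqref{E_J}. Inserting \eqref{E_rect} into \eqref{E_nu} and optimising over $\|\eta''\|_{L^\infty}\le1$ --- the extremiser being $\eta(v)=\pm v^2/2$ --- yields the explicit identity $\nu\llcorner J=\tfrac1{12}\,|u^+-u^-|^3\,|\n_x|\,\H^1\llcorner J$, with $|\n_x|\in\big[(1+M^2)^{-1/2},1\big]$. Hence, for every $k\in\N$, $\H^1\big(J\cap\{|u^+-u^-|\ge1/k\}\big)\le C(M)\,k^3\,\nu([0,T]\times\R)<\infty$. Since $J$ is $\H^1$-rectifiable by \cite{Lecumberry_PhD,DLW_structure} and its approximate tangent lines are non-vertical with slope $\le M$, each of these pieces of finite measure is contained, up to an $\H^1$-negligible and hence --- because $\nu\llcorner J\ll\H^1\llcorner J$ --- $\nu$-negligible set, in countably many $M$-Lipschitz graphs over $t$. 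Taking the union over $k$ shows that $\nu\llcorner J$ is concentrated on countably many Lipschitz curves.

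It remains to prove $\nu(J^c)=0$: that $\omega$-almost every curve with an interior endpoint in fact terminates (or originates) at a point of $\bigcup_k\{|u^+-u^-|\ge1/k\}\subseteq J$ --- equivalently, that a termination unaccompanied by a genuine jump produces no entropy. Here the Lagrangian representation does the real work, and the aim is a self-improving quantitative estimate. If $(t_0,x_0)\in J^c$, so that $u$ has vanishing mean oscillation there around some averaged value $\bar u$, then the $\omega$-mass of curves with an interior endpoint in $B_r(t_0,x_0)$ should be carried by kinetic levels in a neighbourhood of $\bar u$ whose width vanishes as $r\to0$, whence the dissipation produced in $B_r(t_0,x_0)$ is controlled by the cube of the local oscillation of $u$. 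Combined with a Vitali covering of $J^c\cap\operatorname{supp}\nu$ by such balls, and with the fact --- also to be extracted from the Lagrangian representation --- that $\nu$ is concentrated on a set of $\sigma$-finite $\H^1$ measure, the density comparison theorem $\nu(E)\le C\big(\sup_E\Theta^{*1}(\nu,\cdot)\big)\H^1(E)$ applied piece by piece gives $\nu(J^c)=0$. Together with the previous paragraph, $\nu$ is then concentrated on countably many $M$-Lipschitz curves $\gamma_i:[0,T]\to\R$.

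The main obstacle is precisely the quantitative estimate above, together with the $\sigma$-finiteness of the concentration set of $\nu$. For $\BV$ and entropy solutions the concentration of the dissipation on the jump set is classical, but here the sole structural hypothesis is that every entropy production is a finite measure --- not even one of a fixed sign --- so Oleinik's one-sided estimate, $\BV$ bounds, and strong traces off $J$ are all unavailable. The delicate task is to use the Lagrangian representation to extract, near a termination point, just enough one-sided structure --- heuristically, that the characteristics of levels slightly above $\bar u$ must be focused into the point while those slightly below must emanate from it --- and to convert the sharp cubic scaling $\nu\sim(\mathrm{jump})^3$, the same scaling that is responsible for the $B^{1/3,3}$ regularity, into an absolute bound on the dissipation that can be produced in the absence of a jump.
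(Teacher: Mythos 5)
Your argument is not complete: everything is reduced to the claim that $\nu(J^c)=0$, and for that claim you offer only a programme --- a ``self-improving quantitative estimate'' controlling the dissipation in $B_r(t_0,x_0)$ by the cube of the local oscillation, plus the $\H^1$-$\sigma$-finiteness of a concentration set of $\nu$ --- without proving either ingredient. But this \emph{is} the content of the theorem: by the very definition \eqref{E_J}, $\nu\llcorner J^c$ gives no mass to any set of finite $\H^1$-measure, so knowing that $\nu$ is concentrated on an $\H^1$-$\sigma$-finite (equivalently, $1$-rectifiable) set is equivalent to the conclusion, and the density-comparison step at the end would then be redundant. The first part of your argument (the treatment of $\nu\llcorner J$ via \eqref{E_rect} and the known rectifiability of $J$) is correct but is the easy half; the remark after the statement of Theorem \ref{T_main} makes exactly this point. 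Two further issues affect even your set-up: the Lagrangian representation actually used here (Definition \ref{D_Lagr}) consists of curves $\gamma=(\gamma_x,\gamma_v)$ defined on all of $[0,T]$ with $\gamma_v$ of bounded variation and $\dot\gamma_x=\gamma_v$, the dissipation being encoded in the variation of $\gamma_v$ through $\mu_\gamma$ rather than in the creation or destruction of constant-level segments, so your bound $\nu\le C(M)\bigl[(\mathrm{ev}^\sigma)_\sharp(\omega\llcorner\Gamma^{\mathrm{int}})+(\mathrm{ev}^\tau)_\sharp(\omega\llcorner\Gamma^{\mathrm{int}})\bigr]$ is an unproved assertion about a different object; and the statement that a terminating characteristic ``forces a Rankine--Hugoniot discontinuity'' assumes the conclusion.

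For comparison, the paper's mechanism is soft and avoids any quantitative oscillation estimate. It uses two representations, $\omega_h$ for the hypograph and $\omega_e$ for the epigraph, both representing $\mu^-$ as in \eqref{E_mu-}, and couples them by a plan $\pi^-$ concentrated on pairs of curves dissipating at the same point $(t,x,v)$. An elementary one-dimensional BV lemma (Lemma \ref{L_BV}) shows that for $\pi^-$-a.e.\ such pair the hypograph curve lies strictly to the left of the epigraph curve at times just before $t$; hence the pair belongs to $G^l_{\bar t_i,\bar x_j}\times G^r_{\bar t_i,\bar x_j}$ for some rational $(\bar t_i,\bar x_j)$, and the non-crossing principle (Proposition \ref{P_h-e} and Corollary \ref{C_no_crossing}) traps the dissipation point on the graph of the Lipschitz separating curve $f_{\bar t_i,\bar x_j}$. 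The countable family of these curves is $J^-$, and $\mu^+$ is handled by time reversal. If you wish to pursue your route, the one-sided structure you describe near a termination point (levels above $\bar u$ focusing in, levels below emanating) is precisely what the hypograph/epigraph ordering supplies, but it is exploited qualitatively, not through a cubic-scaling bound.
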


We observe that by definition of $J$ it follows that $\nu\llcorner J^c$ does not charge any set with finite $\H^1$-measure, and therefore any 1-rectifiable set; 
in particular, in the statement of Theorem \ref{T_main}, we can replace the set $J'$ with the set $J$ defined in \eqref{E_J} and recover the representation \eqref{E_rect} for 
$\mu_\eta=\mu_\eta\llcorner J$.

The fine properties of solutions to \eqref{E_Burgers} and \eqref{E_eikonal} that we discussed and the regularizing effect for conservation laws can be attributed to 
the characteristics structure that these equations own.
The main tool to get Theorem \ref{T_main} is the Lagrangian representation, which is an extension of the method of characteristics to deal with nonsmooth solutions.
It has been introduced first for entropy solutions to general scalar conservation laws in \cite{BBM_multid}, relying on the transport-collapse scheme by Brenier \cite{Brenier_TC}.
Then this notion has been extended to cover the case of bounded weak solutions with finite entropy production in \cite{M_Lebesgue}, building on the kinetic formulation introduced in \cite{DLW_structure}:
in the particular case of the Burgers equation it takes the form provided in Definition \ref{D_Lagr}.
We would like to point out that the notion of Lagrangian representation is inspired by the superposition principle for nonnegative measure-valued solutions
to the continuity equation (see \cite{AC_superposition}). In particular it shares with it the main feature that the evolution of the solution $u$ is described as the result of 
the evolutions of the single particles along the characteristics.
Finally we mention that the notion of Lagrangian representation is available for solutions with finite entropy production to the eikonal equation too \cite{M_eikonal}.

As a consequence of Theorem \ref{T_main} and \eqref{E_rect} we obtain the following result:
\begin{theorem}\label{T_one_entropy}
Let $u$ be a bounded weak solution to the Burgers equation with finite entropy production. Then it holds
\begin{equation*}
|(p_{t,x})_\sharp \mu| = (p_{t,x})_\sharp |\mu|.
\end{equation*}
In particular, denoting by $\bar \eta(u)=u^2/2$,  if $\mu_{\bar \eta}\le 0$ then $\mu\le 0$, namely if $u$ dissipates the quadratic entropy $\bar \eta$, then $u$ is the entropy solution to \eqref{E_Burgers}.
\end{theorem}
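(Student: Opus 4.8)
The plan is to reduce the first identity to a density computation on the $\H^1$-rectifiable set $J$ from \eqref{E_J}. For any Radon measure the push-forward contracts the total variation, so $|(p_{t,x})_\sharp\mu|\le(p_{t,x})_\sharp|\mu|=\nu$ holds automatically, and the assertion is the reverse inequality. The key observation is that $(p_{t,x})_\sharp\mu$ is exactly the dissipation of the quadratic entropy: since $\bar\eta''\equiv1$, formula \eqref{E_all_entropies} gives $\langle\mu_{\bar\eta},\phi\rangle=\int\phi(t,x)\,d\mu=\langle(p_{t,x})_\sharp\mu,\phi\rangle$, hence $(p_{t,x})_\sharp\mu=\mu_{\bar\eta}$ and in particular $|\mu_{\bar\eta}|\le\nu$. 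By Theorem \ref{T_main} together with the remark following it, $\nu$ — and therefore also $\mu_{\bar\eta}$ — is concentrated on $J$, so it suffices to prove $\nu\llcorner J\le|\mu_{\bar\eta}|\llcorner J$, i.e.\ to compare densities with respect to $\H^1\llcorner J$.

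For this comparison I would use the trace representation \eqref{E_rect}. Applied to the linear entropy $\eta(v)=v$ — for which $\mu_\eta=0$ is just the weak formulation of \eqref{E_Burgers}, with flux $v^2/2=\bar\eta$ — it yields the Rankine–Hugoniot identity $\n_t=-\tfrac12(u^++u^-)\n_x$ for $\H^1$-a.e.\ point of $J$. Substituting this into the density $f_\eta:=(\eta(u^+)-\eta(u^-))\n_t+(q(u^+)-q(u^-))\n_x$ of $\mu_\eta\llcorner J$, using $q'(v)=v\eta'(v)$, and integrating by parts with the primitive $G(v)=\tfrac12(v-u^-)(v-u^+)$, which vanishes at both traces, I would obtain
\[
f_\eta=\n_x\int_{u^-}^{u^+}\tfrac12\,(v-u^-)(u^+-v)\,\eta''(v)\,dv .
\]
Since the kernel $\tfrac12(v-u^-)(u^+-v)$ keeps a constant sign on the segment joining $u^-$ and $u^+$, this gives $|f_\eta|\le\|\eta''\|_{L^\infty}\,|f_{\bar\eta}|$ $\H^1$-a.e.\ on $J$. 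Hence, for every $\eta$ with $\|\eta''\|_{L^\infty}\le1$ we get $\mu_\eta\llcorner J\le|f_{\bar\eta}|\,\H^1\llcorner J=|\mu_{\bar\eta}|\llcorner J$; taking the supremum over such $\eta$ and recalling $\nu=\sup_{\|\eta''\|_{L^\infty}\le1}\mu_\eta$ from \eqref{E_nu} (restriction to the Borel set $J$ commutes with the lattice supremum) yields $\nu\llcorner J\le|\mu_{\bar\eta}|\llcorner J$, which is what was needed. Thus $\nu=|\mu_{\bar\eta}|=|(p_{t,x})_\sharp\mu|$.

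For the consequence, suppose $\mu_{\bar\eta}\le0$. Then $(p_{t,x})_\sharp|\mu|=\nu=|\mu_{\bar\eta}|=-\mu_{\bar\eta}=-(p_{t,x})_\sharp\mu$, so the nonnegative measure $|\mu|+\mu$ has vanishing push-forward under $p_{t,x}$ and must therefore be the zero measure; hence $\mu=-|\mu|\le0$. By \eqref{E_all_entropies}, for every convex $\eta$ and every nonnegative $\phi$ one then has $\langle\mu_\eta,\phi\rangle=\int\eta''(v)\phi(t,x)\,d\mu\le0$, so $u$ is a bounded entropy solution of \eqref{E_Burgers}, hence the unique one by the classical well-posedness theory \cite{Kruzhkov_contraction}.

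I expect the only genuinely delicate point to be the sign bookkeeping in the displayed formula for $f_\eta$: one has to verify that $(v-u^-)(u^+-v)$ does not change sign on the segment between $u^-$ and $u^+$ irrespective of their ordering, so that the orientation of the integral is compensated and the bound $|f_\eta|\le\|\eta''\|_{L^\infty}|f_{\bar\eta}|$ holds pointwise; once this is settled, the rest is a direct consequence of Theorem \ref{T_main}, \eqref{E_rect} and \eqref{E_nu}. (One should also dispose of the $\H^1$-null, or anyway $\nu$-irrelevant, set of points of $J$ where $u^+=u^-$, on which all the densities $f_\eta$ vanish.)
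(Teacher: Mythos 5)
Your proposal is correct and follows essentially the same route as the paper: reduce to the rectifiable set $J$ via Theorem \ref{T_main}, use \eqref{E_rect} together with the Rankine--Hugoniot relation $\n_t=-\tfrac12(u^++u^-)\n_x$, and integrate by parts to express the density of $\mu_\eta\llcorner J$ as an integral of $\eta''$ against a kernel of constant sign on the segment between the traces, so that the supremum over $\|\eta''\|_{L^\infty}\le 1$ is achieved at $\pm\bar\eta$. Your choice of primitive $G(v)=\tfrac12(v-u^-)(v-u^+)$, which annihilates the boundary term, is in fact a slightly cleaner bookkeeping than the paper's display, and your deduction of $\mu\le 0$ from the vanishing of $(p_{t,x})_\sharp(|\mu|+\mu)$ is exactly the intended conclusion.
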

The second part of this statement is known even under milder assumptions: in \cite{Panov_one_entropy} the result has been proved for all bounded weak solutions to \eqref{E_Burgers} and
in \cite{DLOW_one_entropy} it has been extended to weak solutions in $L^4_\loc$.
Both proofs rely on the link between entropy solutions to \eqref{E_Burgers} and viscosity solutions to the Hamilton-Jacobi equation
\begin{equation*}
v_t + \frac{v_x^2}{2}=0.
\end{equation*}
The interest in looking for alternative proofs is motivated by the problem of extending this result to systems of conservation laws, where the link with the Hamilton-Jacobi equation
is not available: see \cite{KV_one_entropy} for a recent result in this direction.

We finally observe that both Theorem \ref{T_main} and Theorem \ref{T_one_entropy} can be proved with the same strategy and minor modifications in the case of bounded weak solutions with finite entropy production to conservation laws with uniformly convex fluxes.

\section{Lagrangian representation}
In this section we present the notion of Lagrangian representation introduced in \cite{M_Lebesgue} and we discuss some of its properties.
Since we consider bounded solutions we can assume without loss of generality that $u$ takes values in $[0,1]$.

For every function $f:\R\to [0,1]$ we denote its hypograph and its epigraph by
\begin{equation*}
E_f:= \{(x,v)\in \R\times [0,1]: v\le f(x)\} \qquad \mbox{and} \qquad E_f^c:= \{(x,v)\in \R\times [0,1]: v\ge f(x)\}
\end{equation*}
respectively.
Moreover we denote by
\begin{equation*}
\Gamma :=\left\{\gamma=(\gamma_x,\gamma_v)\in BV([0,T];\R\times [0,1]): \gamma_x \mbox{ is Lipschitz}
\right\}.
\end{equation*}
It will be useful to consider the standard decomposition of the measure $Df \in \M(\R)$, where $f\in \BV(\R,\R)$ (see for example \cite{AFP_book}).
We will adopt the following notation:
\begin{equation}
Df= D^{\ac}f + D^c f + D^jf = \tilde Df + D^jf,
\end{equation}
where $D^{\ac}f$, $D^cf$ and $D^jf$ denote the absolutely continuous part, the Cantor part and the atomic part of $Df$ respectively; we refer to $\tilde Df$ as the diffuse part of $Df$.

\begin{definition}\label{D_Lagr}
Let $u$ be a weak solution to \eqref{E_Burgers} with finite entropy production. We say that the Radon measure $\omega_h \in \M(\Gamma)$ is a \emph{Lagrangian representation} of the hypograph of $u$ if the following conditions hold:
\begin{enumerate}
\item for every $t\in [0,T)$ it holds
\begin{equation}\label{E_repr_formula}
(e_t)_\sharp \omega_h = \mathscr L^{2}\llcorner E_{u(t)},
\end{equation}
where $e_t$ denotes the evaluation map defined by
\begin{equation}\label{E_evaluation}
\begin{split}
e_t : \Gamma &\to \R. \\
\gamma &\mapsto \gamma(t)
\end{split}
\end{equation}
\item the measure $\omega_h$ is concentrated on the set of curves $\gamma\in \Gamma$ such that
\begin{equation}\label{E_characteristic}
\dot\gamma_x(t)=\gamma_v(t) \quad \mbox{for a.e. }t\in [0,T);
\end{equation}
\item it holds the integral bound
\begin{equation}\label{E_reg}
\int_\Gamma \TV_{[0,T)} \gamma_v d\omega_h(\gamma) <\infty.
\end{equation}
\end{enumerate}
Similarly we say that $\omega_e \in \M(\Gamma)$ is a \emph{Lagrangian representation} of the epigraph of $u$ if Conditions (2) and (3) hold and (1) is replaced by
\begin{equation}\label{E_repr_e}
(e_t)_\sharp \omega_e = \mathscr L^2\llcorner E^c_{u(t)} \qquad \mbox{for every }t \in [0,T].
\end{equation}
\end{definition}

A useful property of these representations is that the kinetic measure (and therefore any entropy dissipation measure $\mu_\eta$) can be decomposed along the characteristics.

Given $\gamma \in \Gamma$ we consider
\begin{equation*}
\mu_\gamma=(\Id, \gamma)_\sharp  \tilde D_t \gamma_v + \H^1\llcorner E_{\gamma}^+ -\H^1\llcorner E_{\gamma}^- \in \M([0,T]\times \R \times [0,1]),
\end{equation*}
where
\begin{equation*}
\begin{split}
E_\gamma^+:=&\{(t,x,v): \gamma_x(t)=x, \gamma_v(t-)<\gamma_v(t+), v \in (\gamma_v(t-),\gamma_v(t+)) \}, \\
E_\gamma^-:=&\{(t,x,v): \gamma_x(t)=x, \gamma_v(t+)<\gamma_v(t-), v \in (\gamma_v(t+),\gamma_v(t-)) \},
\end{split}
\end{equation*}
$\Id:[0,T]\to [0,T]$ denotes the identity and $\tilde D_t \gamma_v$ denotes the diffuse part of the measure $D_t\gamma_v$.


\begin{proposition}\label{P_Lagrangian}
Let $u$ be a weak solution to \eqref{E_Burgers} with finite entropy production. 
Then there exist $\omega_h, \omega_e$ Lagrangian representations of the hypograph and of the epigraph of $u$ respectively enjoying the additional properties:
\begin{equation}\label{E_decomposition_mu}
\int_{\Gamma}\mu_\gamma d\omega_h(\gamma)=\mu = -\int_{\Gamma}\mu_\gamma d\omega_e(\gamma),
\end{equation}
\begin{equation}\label{E_variation}
\int_{\Gamma}|\mu_\gamma| d\omega_h(\gamma)=|\mu| = \int_{\Gamma}|\mu_\gamma| d\omega_e(\gamma).
\end{equation}
\end{proposition}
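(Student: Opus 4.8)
The plan is to construct $\omega_h$ and $\omega_e$ by the superposition/Lagrangian machinery of \cite{M_Lebesgue}, and then to upgrade the mere existence of a Lagrangian representation to the precise identities \eqref{E_decomposition_mu}--\eqref{E_variation}. For the existence part I would proceed as in \cite{M_Lebesgue}: approximate $u$ by vanishing-viscosity or by a front-tracking/transport-collapse scheme, for which characteristics are genuine curves, push forward Lebesgue measure on the (approximate) hypograph along these curves to obtain measures $\omega_h^\delta\in\M(\Gamma)$, verify the uniform bound \eqref{E_reg} using that the total entropy production of the approximations is uniformly bounded (this is exactly where $B^{1/3,3}$-type control or the measure bound on $\mu_\eta$ enters), and pass to a weak limit in $\M(\Gamma)$ after checking tightness. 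The limit $\omega_h$ then satisfies (1)--(3) of Definition \ref{D_Lagr}; the analogous construction on the epigraph gives $\omega_e$.

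The core of the proof is the identity $\int_\Gamma \mu_\gamma\,d\omega_h=\mu$. First I would establish the analogous statement at the level of a single entropy: testing the kinetic equation $\partial_t\chi+v\partial_x\chi=\partial_v\mu$ against a test function $\varphi(t,x)\psi(v)$ and using the representation formula \eqref{E_repr_formula}, one rewrites the left-hand side as an integral over $\Gamma$ in which, for $\omega_h$-a.e.\ curve $\gamma$, the chain rule along the characteristic \eqref{E_characteristic} produces exactly the three contributions in the definition of $\mu_\gamma$: the diffuse part $\tilde D_t\gamma_v$ (from the continuous and Cantor variation of the kinetic variable along the curve) and the two half-line terms $\H^1\llcorner E_\gamma^\pm$ (from the jumps of $\gamma_v$, where the characteristic "sweeps" an interval of velocities). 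Crucially, \eqref{E_reg} guarantees that $\int_\Gamma|\tilde D_t\gamma_v|\,d\omega_h<\infty$ and that the jump parts are summable, so that $\int_\Gamma\mu_\gamma\,d\omega_h$ is a well-defined finite measure and Fubini applies. Since $\chi$ determines $u$ and hence all of $\mu_\eta$ via \eqref{E_all_entropies}, matching the two expressions for every test function yields the first equality in \eqref{E_decomposition_mu}; the computation on the epigraph is identical up to the sign coming from $E_u^c$ versus $E_u$, giving the second equality.

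For \eqref{E_variation} the inequality $|\mu|\le\int_\Gamma|\mu_\gamma|\,d\omega_h$ is immediate from \eqref{E_decomposition_mu} and the triangle inequality for measures. The reverse inequality is the delicate point and, I expect, the main obstacle: it requires that there be no cancellation between the contributions of different characteristics, i.e.\ that $\omega_h$-a.e.\ the measures $\mu_\gamma$ have a coherent sign in the $v$-direction. The mechanism is that along each characteristic $\gamma_v$ can only jump downward (this is the Lagrangian incarnation of the Oleinik one-sided bound / the sign of the kinetic measure), so $E_\gamma^+$ is $\omega_h$-negligible and $\tilde D_t\gamma_v\le0$; hence all $\mu_\gamma$ push mass in the same (negative $v$) direction and their total variations add without loss. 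Concretely I would show $\sup_{\|\eta''\|_\infty\le1}\int_\Gamma\langle\mu_\gamma,\eta''(v)\varphi\rangle\,d\omega_h=\int_\Gamma|\mu_\gamma|\,d\omega_h$ for nonnegative $\varphi$, using \eqref{E_nu}, and deduce equality. The same argument with $\omega_e$ closes the proof, the sign now being consistent because on the epigraph the relevant monotonicity is reversed together with the overall sign in \eqref{E_decomposition_mu}. The technically heaviest steps will be the tightness/compactness in $\M(\Gamma)$ for the approximating measures and the rigorous chain-rule-along-a-$BV$-curve computation identifying $\mu_\gamma$, both of which I would import essentially verbatim from \cite{M_Lebesgue}.
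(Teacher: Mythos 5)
Your overall architecture (approximate, push forward Lebesgue measure on the hypograph along approximate characteristics, get tightness from the uniform bound \eqref{E_reg}, pass to the limit, and identify $\int_\Gamma\mu_\gamma\,d\omega_h$ with $\mu$ by testing the kinetic equation and a chain rule along $BV$ curves) is indeed the route of \cite{M_Lebesgue}. Note that the paper does not reprove any of this: it cites \cite{M_Lebesgue} verbatim for $\omega_h$, and obtains $\omega_e$ not by redoing the epigraph construction but by the cheaper device of taking a hypograph representation $\tilde\omega_h$ of $\tilde u=1-u$, which solves $\tilde u_t+g(\tilde u)_x=0$ with $g(z)=-(z-1)^2/2$, and setting $\omega_e=T_\sharp\tilde\omega_h$ with $T(\gamma)=(\gamma_x,1-\gamma_v)$; this automatically converts \eqref{E_repr_formula} into \eqref{E_repr_e} and preserves \eqref{E_characteristic} since $g'(1-v)=v$.

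There is, however, a genuine error in your justification of \eqref{E_variation}. You claim that for $\omega_h$-a.e.\ $\gamma$ the component $\gamma_v$ only jumps downward and $\tilde D_t\gamma_v\le 0$, calling this ``the Lagrangian incarnation of the Oleinik bound''. If that were true, every $\mu_\gamma$ would be a nonpositive measure and \eqref{E_decomposition_mu} would force $\mu=\int_\Gamma\mu_\gamma\,d\omega_h\le 0$, i.e.\ every bounded weak solution with finite entropy production would already be the entropy solution. That is false (a single non-entropic shock with $u^-<u^+$ has $\mu^+\neq 0$), the Oleinik estimate holds only for entropy solutions, and the paper manifestly needs both $\mu_\gamma^+$ and $\mu_\gamma^-$ in \eqref{E_mu-} and both $E_\gamma^+$ and $E_\gamma^-$ in the definition of $\mu_\gamma$. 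The actual content of \eqref{E_variation} is not that each $\mu_\gamma$ has a sign, but that the contributions of \emph{different} curves passing through the same point $(t,x,v)$ do not cancel; this is a minimality property of the particular representation built in \cite{M_Lebesgue} (it is exactly what the paper means by ``minimizing $\int_\Gamma \TV_{[0,T)}\gamma^2\,d\omega_h$''), and it must be extracted from the construction itself --- e.g.\ from a monotone rearrangement of the curves in $v$ at the approximation level --- not from a one-sided bound on $\gamma_v$. As written, your argument establishes only the easy inequality $|\mu|\le\int_\Gamma|\mu_\gamma|\,d\omega_h$ and leaves the reverse inequality, which you correctly identified as the delicate half, unproved.
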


Eq. \eqref{E_decomposition_mu} asserts that $\mu$ can be decomposed along characteristics and Eq. \eqref{E_variation} says that it can be done minimizing
\begin{equation*}
\int_\Gamma \TV_{[0,T)} \gamma^2 d\omega_h(\gamma) \qquad \mbox{and} \qquad \int_\Gamma \TV_{[0,T)} \gamma^2 d\omega_e(\gamma). 
\end{equation*}
Moreover it follows from \eqref{E_decomposition_mu} and \eqref{E_variation} that we can separately represent the negative and the positive parts of $\mu$:
\begin{equation}\label{E_mu-}
\int_{\Gamma}\mu^-_\gamma d\omega_h(\gamma)=\mu^- = \int_{\Gamma}\mu^+_\gamma d\omega_e(\gamma) \qquad \mbox{and} \qquad 
\int_{\Gamma}\mu^+_\gamma d\omega_h(\gamma)=\mu^+ = \int_{\Gamma}\mu^-_\gamma d\omega_e(\gamma).
\end{equation} 

The assertion of Proposition \ref{P_Lagrangian} regarding $\omega_h$ is proved in \cite{M_Lebesgue} for more general conservation laws and it is straightforward to adapt the 
same argument to get the existence of an $\omega_e$ as in the statement.
Alternatively, in order to prove the part concerning $\omega_e$, we can consider a Lagrangian representation $\tilde \omega_h$ of the hypograph of $\tilde u = 1-u$, which is a 
weak solution with finite entropy production to
\begin{equation*}
\tilde u_t + g(\tilde u)_x=0, \qquad \mbox{with} \qquad g(z)=-\frac{(z-1)^2}{2}.
\end{equation*}
Let $T:\Gamma \to \Gamma$ be defined by
\begin{equation*}
T(\gamma)(t)=(\gamma_x(t),1-\gamma_v(t)).
\end{equation*}
Then it is straightforward to check that the measure $\omega_e:=T_\sharp \tilde \omega_h$ satisfies the requirements in Proposition \ref{P_Lagrangian}.

The following lemma is an application of Tonelli theorem:
\begin{lemma}
For $\omega_h$-a.e. $\gamma \in \Gamma$ it holds that for $\L^1$-a.e. $t\in [0,T]$
\begin{enumerate}
\item $(t,\gamma_x(t))$ is a Lebesgue point of $u$;
\item $\gamma_v(t)< u(t,\gamma_x(t))$.
\end{enumerate}
We denote by $\Gamma_h$ the set of curves $\gamma\in \Gamma$ such that the two properties above hold.
Similarly for $\omega_e$-a.e. $\gamma \in \Gamma$ it holds that for $\L^1$-a.e. $t\in [0,T]$
\begin{enumerate}
\item $(t,\gamma_x(t))$ is a Lebesgue point of $u$;
\item $\gamma_v(t)> u(t,\gamma_x(t))$
\end{enumerate}
and we denote the set of these curves by $\Gamma_e$.
\end{lemma}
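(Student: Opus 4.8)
The plan is to exchange the two quantifiers by Tonelli's theorem: instead of fixing a curve and varying time, fix a time and vary the curve, because for a fixed $t$ the pushforward identity \eqref{E_repr_formula} is at our disposal. Equip $\Gamma\times[0,T]$ with the product measure $\omega_h\otimes\L^1$ and introduce the ``bad set''
\[
B:=\Big\{(\gamma,t)\in\Gamma\times[0,T]:\ (t,\gamma_x(t))\text{ is not a Lebesgue point of }u,\ \text{ or }\ \gamma_v(t)\ge u(t,\gamma_x(t))\Big\}.
\]
Fixing Borel representatives of $u$ and of the pointwise values $t\mapsto\gamma(t)$ (the latter consistent with the choice implicitly made in the validity of \eqref{E_repr_formula} for every $t$), the maps $(\gamma,t)\mapsto\gamma_x(t)$, $(\gamma,t)\mapsto\gamma_v(t)$ are Borel and the set of Lebesgue points of $u$ is Borel, so $B$ is product measurable. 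By Tonelli,
\[
\int_\Gamma\L^1\big(\{t:(\gamma,t)\in B\}\big)\,d\omega_h(\gamma)=(\omega_h\otimes\L^1)(B)=\int_0^T\omega_h\big(\{\gamma:(\gamma,t)\in B\}\big)\,dt,
\]
so it is enough to show that the slice $B_t:=\{\gamma:(\gamma,t)\in B\}$ is $\omega_h$-null for $\L^1$-a.e.\ $t$; then the left-hand side vanishes, hence for $\omega_h$-a.e.\ $\gamma$ the section $\{t:(\gamma,t)\in B\}$ is $\L^1$-null, which is precisely the claim (and the set of such $\gamma$ is the announced $\Gamma_h$).

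Fix $t$ and estimate $\omega_h(B_t)$ by splitting into the two defects. For the Lebesgue-point condition: since $u\in L^\infty\subset L^1_\loc$, the set $S\subset[0,T]\times\R$ of non-Lebesgue points of $u$ satisfies $\L^2(S)=0$, so by Fubini its section $S_t:=\{x:(t,x)\in S\}$ is $\L^1$-null for $\L^1$-a.e.\ $t$, and then by \eqref{E_repr_formula} and $E_{u(t)}\subset\R\times[0,1]$,
\[
\omega_h\big(\{\gamma:\gamma_x(t)\in S_t\}\big)=\L^2\big(E_{u(t)}\cap(S_t\times[0,1])\big)\le\L^1(S_t)=0.
\]
For the ordering condition: \eqref{E_repr_formula} says $(e_t)_\sharp\omega_h$ is concentrated on $E_{u(t)}$, so $\gamma_v(t)\le u(t,\gamma_x(t))$ for $\omega_h$-a.e.\ $\gamma$, while $E_{u(t)}\cap\{(x,v):v\ge u(t,x)\}$ is contained in the graph of $u(t,\cdot)$, an $\L^2$-null set, whence
\[
\omega_h\big(\{\gamma:\gamma_v(t)\ge u(t,\gamma_x(t))\}\big)=\L^2\big(E_{u(t)}\cap\{(x,v):v\ge u(t,x)\}\big)=0.
\]
Adding these, $\omega_h(B_t)=0$ for $\L^1$-a.e.\ $t$, which proves the statement for $\omega_h$. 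The statement for $\omega_e$ is obtained verbatim, replacing \eqref{E_repr_formula} by \eqref{E_repr_e}, $E_{u(t)}$ by $E^c_{u(t)}$, and the inequality $\gamma_v(t)\ge u(t,\gamma_x(t))$ by $\gamma_v(t)\le u(t,\gamma_x(t))$.

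Every step after the reduction is a direct consequence of the representation formulas \eqref{E_repr_formula} and \eqref{E_repr_e} together with the Lebesgue differentiation theorem and Fubini's theorem, so no real difficulty is expected there. The only point that deserves attention --- and that I would regard as the (minor) main obstacle --- is the joint measurability of $B$, i.e.\ that the sets appearing above are $\omega_h$-measurable for each $t$ and that $t\mapsto\omega_h(B_t)$ is $\L^1$-measurable; this is settled by fixing Borel representatives as above and using that $\gamma_x$ is $1$-Lipschitz (so that $(\gamma,t)\mapsto\gamma_x(t)$ is even jointly continuous).
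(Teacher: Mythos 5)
Your proof is correct and follows essentially the same route as the paper: both arguments reduce the statement to the time-slices via Tonelli and then kill each slice using the representation formula \eqref{E_repr_formula}, the nullity of the non-Lebesgue set, and the fact that $E_{u(t)}\cap\{v\ge u(t,x)\}$ is contained in the $\L^2$-null graph of $u(t,\cdot)$. The only (cosmetic) difference is that you bundle the two defects into a single bad set while the paper runs two separate Tonelli computations.
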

\begin{proof}
Let us prove the properties about $\Gamma_h$.
We denote by $S \subset [0,T]\times \R$ the set of non-Lebesgue points of $u$ and for every $t\in [0,T]$ we set
\begin{equation*}
\begin{split}
e_t^x : \Gamma &\to \R. \\
\gamma &\mapsto \gamma_x(t)
\end{split}
\end{equation*}
By \eqref{E_repr_formula} it follows that for every $t\in [0,T]$ it holds $(e_t^x)_\sharp \omega_h \le \L^1$. Since $\L^2(S)=0$ it holds
\begin{equation}\label{E_Ton1}
\begin{split}
 0 =&~ \left(\L^1 \otimes (e_t^x)_\sharp \omega_h \right)(S) \\
   = &~  \int_0^T  \omega_h(\{\gamma\in \Gamma: (t,\gamma_x(t))\in S\})dt \\
   = &~ \int_\Gamma \L^1(\{t\in [0,T]: (t,\gamma_x(t)) \in S\})d\omega_h(\gamma),
\end{split}
\end{equation} 
where the last equality follows by Tonelli theorem.
Similarly by \eqref{E_repr_formula} it follows that for every $t \in[0,T]$
\begin{equation}
\omega_h(\{\gamma \in \Gamma: \gamma_v(t)\ge u(t,\gamma_x(t))\})= (e_t)_\sharp \omega_h(\{(x,v)\in \R\times [0,1]: v \ge u(t,x)\})=0,
\end{equation}
where $e_t$ is defined in \eqref{E_evaluation}.
Therefore by Tonelli theorem it holds
\begin{equation}\label{E_Ton2}
\begin{split}
0 = &~\int_0^T \omega_h(\{\gamma \in \Gamma: \gamma_v(t)\ge u(t,\gamma_x(t))\})dt \\
= &~ \int_\Gamma \L^1(\{t \in [0,T]: \gamma_v(t) \ge u(t,\gamma_x(t))\})d\omega_h(\gamma).
\end{split}
\end{equation}
By \eqref{E_Ton1} and \eqref{E_Ton2} it follows that $\omega_h$ is concentrated on $\Gamma_h$.
The case of $\Gamma_e$ is analogous. 
\end{proof}

The following result is a well-known property of traces on Lipschitz curves:
\begin{lemma}\label{L_trace}
Assume that $\bar \gamma_x:(t^-,t^+)\subset [0,T]\to \R$ is a Lipschitz curve and that for $\L^1$-a.e. $t \in (t^-,t^+)$ the point $(t,\bar \gamma_x(t))$ is a Lebesgue point of $u \in L^\infty([0,T]\times \R)$. Then
\begin{equation}\label{E_trace}
\lim_{\delta \to 0} \int_{t^-}^{t^+} \frac{1}{\delta}\int_{\gamma_x(t)}^{\gamma_x(t)+\delta}|u(t,x)-u(t,\bar\gamma_x(t))|dxdt = 0.
\end{equation}
\end{lemma}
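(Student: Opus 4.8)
The plan is to combine the (two‑dimensional) Lebesgue point information available along $\bar\gamma_x$ with a covering of the time interval at scale $\delta$; the difficulty is precisely that \eqref{E_trace} is a one‑dimensional average over each time slice $\{t\}\times\R$, whereas the only pointwise information we have at $(t,\bar\gamma_x(t))$ is two‑dimensional. Write $\Lambda:=\Lip(\bar\gamma_x)$, $M:=\|u\|_{L^\infty}$, and for $\L^1$‑a.e.\ $t\in(t^-,t^+)$ let $\bar u(t):=u(t,\bar\gamma_x(t))$ be the Lebesgue value, so that $\bar u$ is a measurable function on $(t^-,t^+)$ with $|\bar u|\le M$. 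Denote by $I_\delta$ the integral appearing in \eqref{E_trace}.

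First I would fix $\ee>0$ and isolate a uniformly good set of times. Since $(t,\bar\gamma_x(t))$ is a Lebesgue point of $u$ for a.e.\ $t$, we have $\L^2(B_r)^{-1}\int_{B_r(t,\bar\gamma_x(t))}|u-\bar u(t)|\,d\L^2\to0$ as $r\to0$ for a.e.\ $t$; by Egorov's theorem (applied to the nonincreasing quantities $\sup_{0<s\le r}\L^2(B_s)^{-1}\int_{B_s(t,\bar\gamma_x(t))}|u-\bar u(t)|\,d\L^2$) there are a measurable $E\subset(t^-,t^+)$ and a modulus $\omega$ with $\lim_{r\to0}\omega(r)=0$ such that
\begin{equation*}
\int_{B_r(t,\bar\gamma_x(t))}|u-\bar u(t)|\,d\L^2\le\L^2(B_r)\,\omega(r)\qquad\text{for all }t\in E,\ r>0 .
\end{equation*}
By Lusin's theorem, at the price of removing a further set of small measure we may also assume $E$ is compact, $\bar u|_E$ is uniformly continuous with modulus $\omega'$, and $\L^1((t^-,t^+)\setminus E)<\ee$. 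Since the integrand of \eqref{E_trace} is $\le2M$, the part of $I_\delta$ coming from $t\notin E$ is $\le 2M\ee$ for every $\delta$; hence it suffices to prove that $\tfrac1\delta\int_E\int_{\bar\gamma_x(t)}^{\bar\gamma_x(t)+\delta}|u(t,x)-\bar u(t)|\,dx\,dt\to0$.

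Next comes the covering step, which is the crux. Fix $\delta>0$ small. By the Besicovitch covering lemma one extracts from $\{(t-\delta,t+\delta):t\in E\}$ a countable subfamily $\{(t_i-\delta,t_i+\delta)\}_i$ with $t_i\in E$, still covering $E$ and with overlap bounded by an absolute constant; as $E$ is bounded there are at most $C_1/\delta$ of them, $C_1=C_1(t^+-t^-)$. If $t\in E$, $|t-t_i|<\delta$ and $\bar\gamma_x(t)<x<\bar\gamma_x(t)+\delta$, then by the triangle inequality and the Lipschitz bound $\dist\big((t,x),(t_i,\bar\gamma_x(t_i))\big)<(2+\Lambda)\delta$, so $(t,x)\in B_i:=B_{(2+\Lambda)\delta}(t_i,\bar\gamma_x(t_i))$. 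Writing $E':=\{(t,x):t\in E,\ \bar\gamma_x(t)<x<\bar\gamma_x(t)+\delta\}$ and using $|u(t,x)-\bar u(t)|\le|u(t,x)-\bar u(t_i)|+|\bar u(t_i)-\bar u(t)|$ on $B_i$,
\begin{equation*}
\int_{E'}|u(t,x)-\bar u(t)|\,d\L^2\le\sum_i\Big(\int_{B_i}|u-\bar u(t_i)|\,d\L^2+\int_{B_i\cap(E\times\R)}|\bar u(t_i)-\bar u(t)|\,d\L^2\Big).
\end{equation*}
By the choice of $E$ the first integral is $\le\L^2(B_i)\,\omega((2+\Lambda)\delta)$; on $B_i\cap(E\times\R)$ one has $|t-t_i|<(2+\Lambda)\delta$ with $t,t_i\in E$, hence $|\bar u(t_i)-\bar u(t)|\le\omega'((2+\Lambda)\delta)$. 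Since $\L^2(B_i)=C_2\delta^2$ with $C_2=C_2(\Lambda)$, summing over the $\le C_1/\delta$ indices and dividing by $\delta$ gives
\begin{equation*}
\frac1\delta\int_E\int_{\bar\gamma_x(t)}^{\bar\gamma_x(t)+\delta}|u(t,x)-\bar u(t)|\,dx\,dt\le C_1C_2\big(\omega((2+\Lambda)\delta)+\omega'((2+\Lambda)\delta)\big)\longrightarrow 0 .
\end{equation*}
Therefore $\limsup_{\delta\to0}I_\delta\le 2M\ee$, and letting $\ee\to0$ yields \eqref{E_trace}.

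I expect the covering bookkeeping to be the main obstacle, because of the dimensional mismatch noted above: the strip $E'$ has $\L^2$‑measure of order $\delta$ and is covered by of order $1/\delta$ balls of radius of order $\delta$, each contributing of order $\delta^2\omega(\delta)$, which survives division by $\delta$ only because the overlap of the covering is bounded — so the two‑dimensional Lebesgue point estimate has to be spent with exactly the right efficiency. By contrast, a dominated convergence argument is not available (one does not control the slice $u(t,\cdot)$ pointwise at $x=\bar\gamma_x(t)$), and replacing $u$ by a mollification $u*\rho_\eta$ fails as well, since $|u-u*\rho_\eta|$ may concentrate along the graph of $\bar\gamma_x$, so that $\delta^{-1}\int_{E'}|u-u*\rho_\eta|\,d\L^2$ need not be small uniformly in $\delta$.
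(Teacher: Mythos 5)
Your argument is correct, and every step checks out: the Egorov/Lusin reduction to a compact set $E$ with a uniform Lebesgue-point modulus $\omega$ and a uniform continuity modulus $\omega'$ for $\bar u|_E$, the bounded-overlap covering of $E$ by $O(1/\delta)$ intervals of length $2\delta$, the inclusion of the strip piece over each interval in the ball $B_{(2+\Lambda)\delta}(t_i,\bar\gamma_x(t_i))$, and the final count $O(1/\delta)\cdot O(\delta^2)\cdot(\omega+\omega')$ surviving division by $\delta$. The underlying geometric idea is the same as in the paper --- compare $u(t,x)$ to the Lebesgue value of $u$ at a nearby point of the curve, handle the first error with the two-dimensional Lebesgue point property in a ball of radius $(2+\Lambda)\delta$, and the second with one-dimensional regularity of $t\mapsto u(t,\bar\gamma_x(t))$ --- but the implementation is genuinely different. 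The paper replaces your discrete covering by an averaged triangle inequality: it inserts $u(t',\bar\gamma_x(t'))$ averaged over $t'\in(t-\delta,t+\delta)$, applies Tonelli, and then concludes by \emph{dominated convergence} applied to the slice averages $v_\delta(t)$, using that each of the two resulting error terms tends to $0$ pointwise for a.e.\ $t$ (the first by the Lebesgue point property, the second by the one-dimensional Lebesgue differentiation theorem for $t\mapsto u(t,\bar\gamma_x(t))$) and is bounded by $2\|u\|_{L^\infty}$. This makes your closing remark slightly off the mark: dominated convergence \emph{is} available here, not for $u(t,\cdot)$ at the point $x=\bar\gamma_x(t)$ as you rightly rule out, but for the $\delta$-averaged quantity $v_\delta(t)=\frac1\delta\int_0^\delta|u(t,\bar\gamma_x(t)+y)-u(t,\bar\gamma_x(t))|\,dy$, and it spares all of the Egorov/Lusin/Besicovitch machinery. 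What your version buys in exchange is an explicit, quantitative bound $C\big(\omega((2+\Lambda)\delta)+\omega'((2+\Lambda)\delta)\big)+2\|u\|_{L^\infty}\ee$ on the rate, at the cost of a longer and less self-contained argument.
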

\begin{proof}
For $t \in (t^-,t^+)$ we set
\begin{equation*}
v_\delta(t):= \frac{1}{\delta}\int_0^\delta |u(t,\bar \gamma_x(t)+y)-u(t,\bar \gamma_x(t))|dy,
\end{equation*}
so that \eqref{E_trace} is equivalent to $v_\delta$ to converge to 0 in $L^1(t^-,t^+)$. For every $t \in [t^-+\delta,t^+-\delta]$ we have
\begin{equation}\label{E_trace2}
|u(t,\bar \gamma_x(t)+y)-u(t,\bar \gamma(t))| \le \frac{1}{2\delta}\int_{t-\delta}^{t+\delta}\left( |u(t,\bar \gamma_x(t)+y)-u(t',\bar \gamma_x(t'))| + 
|u(t',\bar \gamma_x(t'))- u(t,\bar \gamma_x(t))| \right) dt'.
\end{equation}
Since $|v_\delta|\le 1$, integrating \eqref{E_trace2} with respect to $t$ we get
\begin{equation}\label{E_trace3}
\begin{split}
\int_{t^-}^{t^+}v_\delta(t)dt \le &~ 2\delta + \int_{t^-+\delta}^{t^+-\delta} \frac{1}{2\delta^2} \int_0^\delta \int_{t-\delta}^{t+\delta}  |u(t,\bar \gamma_x(t)+y)-u(t',\bar \gamma_x(t'))| dt'dy dt \\
&~ +  \int_{t^-+\delta}^{t^+-\delta} \frac{1}{2\delta} \int_{t-\delta}^{t+\delta} |u(t',\bar \gamma_x(t'))- u(t,\bar \gamma_x(t))|dt'dt.
\end{split}
\end{equation}
By Lebesgue differentiation theorem, $\frac{1}{2\delta} \int_{t-\delta}^{t+\delta} |u(t',\bar \gamma_x(t'))- u(t,\bar \gamma(t))|dt'\to 0$ as $\delta \to 0$ for $\L^1$-a.e. $t\in (t^-,t^+)$ therefore
the last term in \eqref{E_trace3} converges to zero by dominated convergence theorem.
By applying Tonelli theorem to the second term in the right-hand side of  \eqref{E_trace3} we get
\begin{equation}\label{E_trace4}
\begin{split}
\int_{t^-+\delta}^{t^+-\delta} \frac{1}{2\delta^2} \int_0^\delta \int_{t'-\delta}^{t'+\delta} & |u(t,\bar \gamma_x(t)+y)-u(t',\bar \gamma_x(t'))| dtdy dt' \le \\
& \int_{t^-+\delta}^{t^+-\delta} \frac{1}{2\delta^2} \int_{B_{(2+L)\delta}(t', \bar \gamma_x(t'))}|u(z)-u(t',\bar \gamma_x(t'))|dz dt', 
\end{split}
\end{equation}
where $L$ is the Lipschitz constant of $\bar \gamma_x$ and $B_{(2+L)\delta}(t', \bar \gamma_x(t'))$ denotes the ball in $\R^2$ of radius $(2+L)\delta$ 
and center $(t', \bar \gamma_x(t'))$. Since by assumption $\L^1$-a.e. $t \in (t^-,t^+)$ is a Lebesgue point of $u$, the right-hand side of \eqref{E_trace4} converges to 0 by the 
dominated convergence theorem and this concludes the proof.
\end{proof}

The following proposition formalizes the intuition that a curve lying in the hypograph of a function cannot cross from above a curve lying in the epigraph of the same function.
\begin{proposition}\label{P_h-e}
Let $\bar \gamma \in \Gamma_h, G\subset \Gamma_e$ and $\bar t>0$ be such that for every $\gamma\in G$ it holds $\bar \gamma_x(\bar t) < \gamma_x(\bar t)$.
Then for every $t \in [\bar t,T]$ it holds
\begin{equation*}
\omega_e(\{\gamma\in G: \gamma_x(t)<\bar \gamma_x(t)\})=0.
\end{equation*}
\end{proposition}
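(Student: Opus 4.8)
The plan is to argue by contradiction via a crossing/exchange argument between Lagrangian particles in the hypograph and in the epigraph, using the first-order ODE $\dot\gamma_x = \gamma_v$ satisfied by $\omega_h$- and $\omega_e$-a.e.\ curve together with the fact that, along a curve in $\Gamma_h$, $\gamma_v(t) < u(t,\gamma_x(t))$ for a.e.\ $t$, while along a curve in $\Gamma_e$, $\gamma_v(t) > u(t,\gamma_x(t))$ for a.e.\ $t$. Suppose, for some $t_1 \in (\bar t, T]$, that the set $G_1 := \{\gamma \in G : \gamma_x(t_1) < \bar\gamma_x(t_1)\}$ has positive $\omega_e$-measure. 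For each such $\gamma$, since $\bar\gamma_x(\bar t) < \gamma_x(\bar t)$ but $\gamma_x(t_1) < \bar\gamma_x(t_1)$, continuity gives a last crossing time $\tau(\gamma) \in (\bar t, t_1)$ with $\gamma_x(\tau(\gamma)) = \bar\gamma_x(\tau(\gamma))$ and $\gamma_x(t) < \bar\gamma_x(t)$ for $t \in (\tau(\gamma), t_1]$. The heuristic obstruction is that at such a crossing the epigraph curve $\gamma$ passes the hypograph curve $\bar\gamma$ from the right going left, which forces $\gamma_v \le \bar\gamma_v$ near $\tau(\gamma)$ on the right; but then comparing $\gamma_v$ with $u$ along $\bar\gamma_x$ yields $u \le \gamma_v \le \bar\gamma_v < u$ on a positive-measure set of times, a contradiction.

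The key steps, in order: (1) Reduce to showing the measure of curves that are strictly to the left of $\bar\gamma$ at the \emph{terminal} time $T$ (or at any fixed $t_1$) is zero; it suffices to treat a fixed $t_1$. (2) For $\omega_e$-a.e.\ $\gamma \in G_1$, introduce the crossing time $\tau(\gamma)$ and note that, because $\gamma_x - \bar\gamma_x$ is Lipschitz, vanishes at $\tau(\gamma)$, and is negative immediately to the right, its a.e.-derivative $\gamma_v(t) - \bar\gamma_v(t)$ must be $\le 0$ on a subset of $(\tau(\gamma), t_1)$ of positive measure accumulating at $\tau(\gamma)$; more precisely, for a.e.\ $t$ in a right-neighbourhood of $\tau(\gamma)$ where $\gamma_x(t) < \bar\gamma_x(t)$, one has $\liminf$-type control giving $\gamma_v(t) \le \bar\gamma_v(t)$ on a positive-measure set. (3) Use the trace lemma (Lemma~\ref{L_trace}) applied on the interval $(\tau(\gamma), t_1)$ to the Lipschitz curve $\bar\gamma_x$: since $(t,\bar\gamma_x(t))$ is a Lebesgue point of $u$ for a.e.\ $t$ (as $\bar\gamma \in \Gamma_h$), the right trace of $u$ along $\bar\gamma_x$ exists, and since $\gamma_x(t) < \bar\gamma_x(t)$ with both curves Lipschitz, the values $u(t,\gamma_x(t))$ approach this same trace as $t \downarrow \tau(\gamma)$; this identifies $\lim_{t\downarrow\tau(\gamma)} u(t,\gamma_x(t)) = \lim_{t\downarrow\tau(\gamma)} u(t,\bar\gamma_x(t))$ in an averaged sense. (4) Combine: on a positive-measure set of times $t > \tau(\gamma)$ we get $u(t,\gamma_x(t)) \le \gamma_v(t) $ is false for $\gamma \in \Gamma_e$ — wait, rather $\gamma_v(t) > u(t,\gamma_x(t))$ holds and $\bar\gamma_v(t) < u(t,\bar\gamma_x(t))$ holds, while step (2) gives $\gamma_v(t) \le \bar\gamma_v(t)$ and step (3) gives $u(t,\gamma_x(t)) \approx u(t,\bar\gamma_x(t))$; chaining these yields $u(t,\bar\gamma_x(t)) \approx u(t,\gamma_x(t)) < \gamma_v(t) \le \bar\gamma_v(t) < u(t,\bar\gamma_x(t))$, a contradiction, once the approximations are made quantitative by integrating over $\gamma \in G_1$ against $\omega_e$ and over $t$, and passing to the limit $t \downarrow \tau(\gamma)$.

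To make this rigorous I would avoid pointwise use of the a priori ill-defined $\tau(\gamma)$ inside integrals and instead work with the function $\phi(t) := \int_G (\bar\gamma_x(t) - \gamma_x(t))^+ \, d\omega_e(\gamma)$, which is Lipschitz in $t$, vanishes at $\bar t$, and whose derivative at a.e.\ $t$ is $\int_{\{\gamma_x(t) < \bar\gamma_x(t)\}} (\bar\gamma_v(t) - \gamma_v(t)) \, d\omega_e(\gamma)$ by differentiating under the integral. The goal becomes showing $\phi \equiv 0$ on $[\bar t, T]$, i.e.\ a Gr\"onwall-type bound $\phi'(t) \le C(t)\,\phi(t)$ with $C \in L^1$. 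The sign input is: whenever $\gamma_x(t) < \bar\gamma_x(t)$ and both $(t,\gamma_x(t)), (t,\bar\gamma_x(t))$ are Lebesgue points, the intermediate-value structure of $u$ between the two nearby $x$-values, controlled via Lemma~\ref{L_trace}, forces $\gamma_v(t) - \bar\gamma_v(t)$ to be essentially nonpositive up to an error measured by the oscillation of $u$ on the strip between the curves, which in turn is controlled by $\phi$. The main obstacle — and the technical heart — is exactly this quantitative step: translating "$u$ has the right trace from both sides along Lipschitz curves" into an integrable bound on $\phi'$ in terms of $\phi$, i.e.\ handling the interaction between the averaged transversal oscillation of $u$ and the Lipschitz comparison $\dot\gamma_x = \gamma_v$. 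Once that estimate is in place, Gr\"onwall closes the argument immediately.
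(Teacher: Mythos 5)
Your overall strategy --- set up a time-dependent functional measuring how many epigraph curves have passed to the left of $\bar\gamma$, differentiate it in time using $\dot\gamma_x=\gamma_v$, and close the estimate with the inequalities $\bar\gamma_v<u(\cdot,\bar\gamma_x)$, $\gamma_v>u(\cdot,\gamma_x)$ together with Lemma~\ref{L_trace} --- is the right one, and all the correct ingredients appear in your sketch. However, the functional you choose, $\phi(t)=\int_G(\bar\gamma_x(t)-\gamma_x(t))^+\,d\omega_e(\gamma)$, does not close the argument, and the step you yourself flag as ``the technical heart'' is a genuine gap rather than a routine verification. The derivative $\phi'(t)=\int_{\{\gamma_x(t)<\bar\gamma_x(t)\}}(\bar\gamma_v(t)-\gamma_v(t))\,d\omega_e(\gamma)$ is carried by curves that may already lie at a \emph{macroscopic} distance to the left of $\bar\gamma_x(t)$. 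For such curves the chain $\gamma_v>u(t,\gamma_x(t))\approx u(t,\bar\gamma_x(t))>\bar\gamma_v$ requires comparing values of $u$ at two points a fixed distance apart, and for a merely bounded (or $B^{1/3}_{3,\infty}$) solution this oscillation is not controlled by $\phi(t)$, nor by any integrable function of $t$. Lemma~\ref{L_trace} only controls the \emph{averaged} oscillation of $u$ over an \emph{infinitesimal one-sided} neighbourhood of the fixed Lipschitz curve $\bar\gamma_x$; it says nothing about $|u(t,\gamma_x(t))-u(t,\bar\gamma_x(t))|$ when $\gamma_x(t)$ is far from $\bar\gamma_x(t)$. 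Note also that the crude bound $\phi'(t)\le\omega_e(\{\gamma:\gamma_x(t)<\bar\gamma_x(t)\})$ cannot be converted into $C\,\phi(t)$: a large mass of curves lying only slightly to the left makes $\phi$ arbitrarily small while this measure stays bounded below. Your first, pointwise ``last crossing time'' variant has the same defect, plus measurability and selection issues, as you acknowledge.

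The missing idea is a localization that your functional discards. The paper replaces the positive part by the mollified indicator $\phi_\delta(\gamma_x(t)-\bar\gamma_x(t))$, where $\phi_\delta=1$ on $(-\infty,0]$, affine on $(0,\delta)$ and $0$ on $[\delta,\infty)$, and studies $\Psi_\delta(t)=\int_G\phi_\delta(\gamma_x(t)-\bar\gamma_x(t))\,d\omega_e(\gamma)$. Then $\Psi_\delta(\bar t)\le\delta$ because $(e_{\bar t})_\sharp\omega_e\le\L^2$, and --- crucially --- $\frac{d}{dt}\Psi_\delta$ is supported only on the curves currently in the strip $\gamma_x(t')\in[\bar\gamma_x(t'),\bar\gamma_x(t')+\delta]$: it measures the instantaneous crossing flux rather than the state of already-crossed curves. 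On that strip one bounds $(\bar\gamma_v-\gamma_v)^+\le(u(t',\bar\gamma_x(t'))-\gamma_v)^+\le\ind_{\{\gamma_v<u(t',\bar\gamma_x(t'))\}}$, and the representation formula $(e_{t'})_\sharp\omega_e=\L^2\llcorner E^c_{u(t')}$ converts the $\omega_e$-measure of such strip curves into the area $\int_0^\delta|u(t',\bar\gamma_x(t')+y)-u(t',\bar\gamma_x(t'))|\,dy$, which, divided by $\delta$ and integrated in $t'$, vanishes as $\delta\to0$ precisely by Lemma~\ref{L_trace}. No Gr\"onwall argument is needed: one obtains $\Psi_\delta(t)\le\delta+o_\delta(1)$ directly, and the pointwise inequality $\omega_e(\{\gamma\in G:\gamma_x(t)<\bar\gamma_x(t)\})\le\Psi_\delta(t)$ finishes the proof. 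Without restricting the dissipation of your functional to a $\delta$-strip on the right of $\bar\gamma$ and counting the strip curves via the push-forward identity, the quantitative estimate you need is not available.
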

\begin{proof}
Given $\delta>0$ we define $\phi_\delta:\R\to \R$ by
\begin{equation*}
\phi_\delta (x)=
\begin{cases}
1 & \mbox{if }x\le 0, \\
\frac{1}{\delta}(\delta - x) & \mbox{if }x \in (0,\delta), \\
0 & \mbox{if }x\ge\delta,
\end{cases}
\end{equation*}
and for every $t \in [\bar t, T]$ we consider
\begin{equation*}
\Psi_\delta (t) = \int_{G}\phi_\delta(\gamma_x(t)-\bar \gamma_x(t))d\omega_e(\gamma).
\end{equation*}
Clearly for every $\delta>0$ and every $t \in [\bar t, T]$ it holds 
\begin{equation}\label{E_est_delta}
\omega_e(\{\gamma^2\in G: \gamma^2_x(t)<\gamma^1_x(t)\})\le \Psi_\delta(t)
\end{equation}
and by assumption
\begin{equation*}
\Psi_\delta (\bar t) \le \omega_e(\gamma \in G: \gamma_x(\bar t) \in [\bar\gamma_x(\bar t),\bar\gamma_x(\bar t)+\delta])\le \delta.
\end{equation*}
Since $\omega_e$ is concentrated on curves satisfying \eqref{E_characteristic}, for every $t\in [\bar t, T]$
\begin{equation*}
\Psi_\delta(t) = \Psi_\delta(\bar t) + \int_{\bar t}^t \frac{1}{\delta}\int_{G(\delta,t')}(\bar\gamma_v(t')-\gamma_v(t'))d\omega_e(\gamma)dt'\le \delta + \int_{\bar t}^t \frac{1}{\delta}\int_{G(\delta,t')}(\bar\gamma_v(t')-\gamma_v(t'))^+d\omega_e(\gamma)dt',
\end{equation*}
where $G(\delta, t'):=\{\gamma \in G: \gamma_x(t')\in [\bar \gamma_x(t'),\bar \gamma_x(t')+\delta]\}$.
Since $\bar \gamma \in \Gamma_h$ for $\L^1$-a.e. $t'\in (t^-,t^+)$ it holds $\bar \gamma_v(t')< u(t',\bar\gamma_x(t'))$. Moreover for $\omega_e$-a.e. $\gamma \in \Gamma$ and every $t \in [0,T]$ it holds $\gamma_v(t)\in [0,1]$, therefore 
\begin{equation*}
\begin{split}
\Psi_\delta(t) \le &~ \delta + \int_{\bar t}^t \frac{1}{\delta}\int_{G(\delta,t')}(u(t', \bar \gamma_x(t'))-\gamma_v(t'))^+d\omega_e(\gamma)dt' \\
\le &~ \delta + \int_{\bar t}^t  \frac{1}{\delta}\left( \omega_e(\{\gamma \in G(\delta,t'): \gamma_v(t') < u(t',\bar \gamma_x(t')) \}) \right)dt'\\
\le &~  \delta + \int_{\bar t}^t \frac{1}{\delta}\int_0^\delta |u(t',\gamma_x(t'))-u(t',\gamma_x(t')+y)|dy dt',
\end{split}
\end{equation*}
where the last inequality follows by \eqref{E_repr_e}. The claim follows by Lemma \ref{L_trace} and \eqref{E_est_delta} by letting $\delta \to 0$.
\end{proof}

For every $(\bar t, \bar x)\in [0,T]\times \R$ we denote by 
\begin{equation}\label{E_def_G}
G_{\bar t, \bar x}^l:= \{\gamma\in \Gamma_h: \gamma_x(\bar t)<\bar x\}, \qquad G_{\bar t, \bar x}^r:= \{\gamma\in \Gamma_e: \gamma_x(\bar t)>\bar x\}.
\end{equation}

\begin{corollary}\label{C_no_crossing}
Let $(\bar t, \bar x) \in [0,T)\times \R$ and consider $G_{\bar t, \bar x}^l, G_{\bar t, \bar x}^r$ as above. Then there exists a Lipschitz function $f_{\bar t, \bar x}:[\bar t, T] \to \R$ such that 
 such that for every $t \in [\bar t, T]$ it holds
\begin{equation}\label{E_separation}
\omega_h(\{\gamma \in G_{\bar t, \bar x}^l: \gamma_x(t)>f_{\bar t,\bar x}(t)\})=0 \qquad \mbox{and} \qquad \omega_e(\{\gamma\in G_{\bar t, \bar x}^r: \gamma_x(t) <f_{\bar t,\bar x}(t)\})=0.
\end{equation}
\end{corollary}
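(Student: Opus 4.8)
The plan is to take $f_{\bar t, \bar x}(t)$ to be the $\omega_h$-essential supremum of $\gamma_x(t)$ over the curves $\gamma \in G^l_{\bar t, \bar x}$ — morally, the rightmost left-hypograph characteristic reachable at time $t$ — and then to read off the two assertions in \eqref{E_separation} separately: the one about $\omega_h$ directly from the definition of the essential supremum, and the one about $\omega_e$ from Proposition \ref{P_h-e}.

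Concretely, I would first dispose of the degenerate case $\omega_h(G^l_{\bar t, \bar x}) = 0$, in which the first identity in \eqref{E_separation} is vacuous and one may simply take $f_{\bar t, \bar x} \equiv \bar x$: the second identity then holds because, by \eqref{E_characteristic} together with $\gamma_v \in [0,1]$, every $\gamma \in G^r_{\bar t, \bar x}$ has $\gamma_x$ nondecreasing with $\gamma_x(\bar t) > \bar x$, so $\gamma_x(t) > \bar x$ for all $t \ge \bar t$. When $\omega_h(G^l_{\bar t, \bar x}) > 0$, I would set, for $t \in [\bar t, T]$,
\[
f_{\bar t, \bar x}(t) := \sup\big\{ c \in \R :\ \omega_h\big(\{\gamma \in G^l_{\bar t, \bar x} : \gamma_x(t) > c\}\big) > 0 \big\}.
\]
This is finite: it is at most $\bar x + (t - \bar t)$ since every $\gamma \in G^l_{\bar t, \bar x}$ has $\gamma_x(\bar t) < \bar x$ and $\gamma_x$ is $1$-Lipschitz, and it is bounded below because the sets $\{\gamma \in G^l_{\bar t, \bar x} : \gamma_x(t) > -M\}$ increase to $G^l_{\bar t, \bar x}$ as $M \to \infty$. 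The first identity in \eqref{E_separation} is then immediate, writing $\{\gamma \in G^l_{\bar t, \bar x} : \gamma_x(t) > f_{\bar t, \bar x}(t)\}$ as a countable increasing union of $\omega_h$-null sets.

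It remains to check the Lipschitz bound and the second identity. For $\bar t \le s \le t$, monotonicity $f_{\bar t, \bar x}(s) \le f_{\bar t, \bar x}(t)$ follows from $\gamma_x(s) \le \gamma_x(t)$, while $f_{\bar t, \bar x}(t) \le f_{\bar t, \bar x}(s) + (t-s)$ follows from the inclusion $\{\gamma \in G^l_{\bar t, \bar x} : \gamma_x(t) > f_{\bar t, \bar x}(s) + (t-s)\} \subseteq \{\gamma \in G^l_{\bar t, \bar x} : \gamma_x(s) > f_{\bar t, \bar x}(s)\}$, whose right-hand side is $\omega_h$-null by the first identity at time $s$; hence $f_{\bar t, \bar x}$ is $1$-Lipschitz. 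For the second identity, fix $t \in [\bar t, T]$: any single $\bar\gamma \in G^l_{\bar t, \bar x} \subseteq \Gamma_h$ satisfies $\bar\gamma_x(\bar t) < \bar x < \gamma_x(\bar t)$ for every $\gamma \in G^r_{\bar t, \bar x} \subseteq \Gamma_e$, so Proposition \ref{P_h-e} (applied with this $\bar\gamma$, with $G = G^r_{\bar t, \bar x}$, and at the initial time $\bar t$) yields $\omega_e(\{\gamma \in G^r_{\bar t, \bar x} : \gamma_x(t) < \bar\gamma_x(t)\}) = 0$. Choosing $\bar\gamma^k \in G^l_{\bar t, \bar x}$ with $\bar\gamma^k_x(t) > f_{\bar t, \bar x}(t) - 1/k$ (possible since the defining set has positive $\omega_h$-measure) and observing that $\{\gamma \in G^r_{\bar t, \bar x} : \gamma_x(t) < f_{\bar t, \bar x}(t)\} \subseteq \bigcup_{k} \{\gamma \in G^r_{\bar t, \bar x} : \gamma_x(t) < \bar\gamma^k_x(t)\}$, the left-hand set is a countable union of $\omega_e$-null sets, hence null.

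The only real input is Proposition \ref{P_h-e}, which is already available; everything else is routine measure-theoretic bookkeeping, so I expect no serious obstacle. The one point that needs care is pinning down the definition of $f_{\bar t, \bar x}$ so that it lies above all the left-hypograph curves in $G^l_{\bar t, \bar x}$ (which forces the essential-supremum formula) while still lying below all the right-epigraph curves in $G^r_{\bar t, \bar x}$ (which is exactly the no-crossing content of Proposition \ref{P_h-e}), together with the verification that this essential supremum inherits $1$-Lipschitz regularity from the individual characteristics.
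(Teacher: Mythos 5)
Your proof is correct and takes essentially the same approach as the paper: both define $f_{\bar t,\bar x}$ as a supremum of the left family $G^l_{\bar t,\bar x}$ and deduce the second identity in \eqref{E_separation} from Proposition \ref{P_h-e} applied to a countable approximating family of curves. The only (immaterial) differences are that the paper uses the pointwise supremum $\sup_{\gamma\in G^l_{\bar t,\bar x}}\gamma_x$, which makes the first identity trivial, and selects the approximating curves at rational times using $1$-Lipschitz continuity, whereas you use the $\omega_h$-essential supremum and approximate at each fixed $t$ directly.
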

\begin{proof}
If $G_{\bar t,\bar x}^l=\emptyset$ we set $f_{\bar t,\bar x}(t)= \bar x$ and \eqref{E_separation} follows by \eqref{E_characteristic} since for $\omega_e$-a.e. $\gamma\in \Gamma$, it holds $\dot\gamma_x(t)\ge 0$ for $\L^1$-a.e. $t\in (0,T)$. If $G_{\bar t,\bar x}^l \ne \emptyset$, then we set
\begin{equation}
f_{\bar t, \bar x} = \sup_{\gamma \in G_{\bar t, \bar x}^l} \gamma_x.
\end{equation}
The first condition in \eqref{E_separation} is trivially satisfied and by Proposition \ref{P_h-e} for every $\bar \gamma \in G_{\bar t, \bar x}^l$ it holds
\begin{equation}\label{E_est_gamma}
\omega_e(\{\gamma \in G_{\bar t, \bar x}^r:  \gamma_x(t) <\bar \gamma_x(t)\})=0.
\end{equation}
Let $\{t_i\}_{i\in \N}$ be an enumeration of $\Q \cap (\bar t, T)$ and for every $i,n\in \N$ let $\gamma^{i,n}\in G_{\bar t, \bar x}^l$ be such that $\gamma^{i,n}_x(t_j)\ge f_{\bar t,\bar x}(t_i) - \frac{1}{n}$. 
For every $\gamma \in G_{\bar t, \bar x}^l$ the $x$-component $\gamma_x$ is $1$-Lipschitz, hence $f_{\bar t,\bar x} = \sup_{i,n\in \N}\gamma^{i,n}_x$.
Therefore it follows by \eqref{E_est_gamma} that for every $t \in [\bar t, T]$ it holds
\begin{equation}
\omega_e(\{\gamma\in G_{\bar t, \bar x}^r: \gamma_x(t) <f_{\bar t,\bar x}(t)\}) = \omega_e\left( \bigcup_{i,n\in \N}\left\{ \gamma\in G_{\bar t, \bar x}^r: \gamma_x(t) < \gamma_x^{i,n}(t) \right\}\right)=0. \qedhere
\end{equation}
\end{proof}

Thanks to Corollary \ref{C_no_crossing} we can identify the candidate rectifiable set $J^-$ on which $(p_{t,x})_\sharp \mu^-$ is concentrated: 
let $(\bar t_i)_{i\in \N}$ be an enumeration of $[0,T]\cap \Q$ and $(\bar x_j)_{j\in \N}$ be an enumeration of $\Q$. Then we set
\begin{equation}\label{E_def_J-}
J^-:= \bigcup_{i,j \in \N} C_{f_{\bar t_i,\bar x_j}}, \qquad \mbox{where} \qquad C_{f_{\bar t,\bar x}}:=\{ (t,x)\in [\bar t, T]\times \R: x=f_{\bar t, \bar x}(t)\}.
\end{equation}

\section{Concentration of the entropy dissipation}
In this section we prove Theorem \ref{T_main} and Theorem \ref{T_one_entropy}.
In the next lemma we couple the two representations $\omega_h,\omega_e$ taking into account that they represent the same measure $\mu^-$ as in \eqref{E_mu-}.
We denote by $X=[0,T]\times \R \times [0,1]$ and consider the measures $\omega_h \otimes \mu^-_\gamma$ and $\omega_e \otimes \mu^+_\gamma$ defined on the set $\Gamma \times X$ by
\begin{equation*}
\omega_h\otimes \mu^-_\gamma (G\times E) = \int_G \mu^-_\gamma(E)d\omega_h(\gamma) \qquad \mbox{and} \qquad 
\omega_e\otimes \mu^+_\gamma (G\times E) = \int_G \mu^+_\gamma(E)d\omega_e(\gamma),
\end{equation*}
for every measurable $E \subset X, G\subset \Gamma$.
\begin{lemma}
Denote by $p_1,p_2: (\Gamma \times X)^2\to \Gamma \times X$ the standard projections. Then there exists a plan $\pi^- \in \M((\Gamma \times X)^2)$ with marginals
\begin{equation}\label{E_marginals}
\begin{split}
(p_1)_{\sharp}\pi^- =   \omega_h\otimes \mu^-_\gamma, \\
(p_2)_\sharp \pi^-=   \omega_e\otimes \mu^+_\gamma,
\end{split}
\end{equation}
concentrated on the set
\begin{equation*}
\begin{split}
\mathcal G := \big\{((\gamma,t,x,v),(\gamma',t',x',v')) \in (\Gamma \times X)^2 : \, &t=t', \gamma_x(t)=x=x'=\gamma'_x(t'), v=v', \\
&v \in [\gamma_v(t+),\gamma_v(t-)]\cap [\gamma'_v(t-),\gamma'_v(t+)]\big\}.
\end{split}
\end{equation*}
\end{lemma}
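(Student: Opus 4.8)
The plan is to build $\pi^-$ by the classical gluing of two couplings that project onto the same measure $\mu^-$, after first making the Jordan decomposition of $\mu_\gamma$ explicit.

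\emph{Structure of $\mu^\pm_\gamma$.} First I would record that the three measures $(\Id,\gamma)_\sharp \tilde D_t\gamma_v$, $\H^1\llcorner E^+_\gamma$ and $\H^1\llcorner E^-_\gamma$ are mutually singular: the latter two are carried by the (countable) set of jump times of $\gamma_v$, which the diffuse measure $\tilde D_t\gamma_v$ does not charge, and at each jump time at most one of $E_\gamma^\pm$ is nonempty since one cannot have simultaneously $\gamma_v(t-)<\gamma_v(t+)$ and $\gamma_v(t+)<\gamma_v(t-)$. Since $t\mapsto (t,\gamma_x(t),\gamma_v(t))$ is injective, pushforward commutes with the Jordan decomposition of $\tilde D_t\gamma_v$, hence
\[
\mu^-_\gamma=(\Id,\gamma)_\sharp(\tilde D_t\gamma_v)^-+\H^1\llcorner E^-_\gamma,
\qquad
\mu^+_\gamma=(\Id,\gamma)_\sharp(\tilde D_t\gamma_v)^++\H^1\llcorner E^+_\gamma .
\]
In particular $\mu^-_\gamma$ is concentrated on $A^-_\gamma:=\{(t,x,v)\in X: x=\gamma_x(t),\ v\in[\gamma_v(t+),\gamma_v(t-)]\}$ (on the diffuse piece one has $v=\gamma_v(t)$ at a continuity point $t$, so $\gamma_v(t+)=\gamma_v(t-)=\gamma_v(t)$; on $E^-_\gamma$ one has $x=\gamma_x(t)$ and $v\in(\gamma_v(t+),\gamma_v(t-))$ by definition), and symmetrically $\mu^+_\gamma$ is concentrated on $A^+_\gamma:=\{(t,x,v)\in X: x=\gamma_x(t),\ v\in[\gamma_v(t-),\gamma_v(t+)]\}$.

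\emph{Common projection and gluing.} Writing $q:\Gamma\times X\to X$ for the projection on the last factor, I would note that by the definition of $\omega_h\otimes\mu^-_\gamma$ and by \eqref{E_mu-},
\[
q_\sharp(\omega_h\otimes\mu^-_\gamma)=\int_\Gamma\mu^-_\gamma\,d\omega_h=\mu^-=\int_\Gamma\mu^+_\gamma\,d\omega_e=q_\sharp(\omega_e\otimes\mu^+_\gamma),
\]
and that both measures are finite since $|\mu^\pm_\gamma|\le|\mu_\gamma|$ and $\int_\Gamma|\mu_\gamma|\,d\omega_h=\int_\Gamma|\mu_\gamma|\,d\omega_e=|\mu|<\infty$ by \eqref{E_variation}. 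Disintegrating with respect to $q$ yields $\omega_h\otimes\mu^-_\gamma=\int_X\lambda^h_y\,d\mu^-(y)$ and $\omega_e\otimes\mu^+_\gamma=\int_X\lambda^e_y\,d\mu^-(y)$, where for $\mu^-$-a.e.\ $y$ the $\lambda^h_y,\lambda^e_y$ are probability measures on $\Gamma\times X$ concentrated on the fibre $q^{-1}(y)=\Gamma\times\{y\}$ (legitimate as $\Gamma$ and $X$ are standard Borel spaces). Then I would set
\[
\pi^-:=\int_X\lambda^h_y\otimes\lambda^e_y\,d\mu^-(y)\ \in\ \M\big((\Gamma\times X)^2\big),
\]
which has total mass $\mu^-(X)<\infty$ and satisfies \eqref{E_marginals} by construction, since integrating out one factor returns the corresponding disintegration.

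\emph{Concentration on $\mathcal{G}$ and the main point.} Because $\lambda^h_y$ and $\lambda^e_y$ both live on $q^{-1}(y)$, the plan $\pi^-$ is concentrated on $\{((\gamma,t,x,v),(\gamma',t',x',v')):(t,x,v)=(t',x',v')\}$. Moreover, $\omega_h\otimes\mu^-_\gamma$ is concentrated on $\{(\gamma,y):y\in A^-_\gamma\}$ by the first step, so for $\mu^-$-a.e.\ $y$ the measure $\lambda^h_y$ is concentrated on $\{\gamma:y\in A^-_\gamma\}\times\{y\}$, and likewise $\lambda^e_y$ on $\{\gamma':y\in A^+_{\gamma'}\}\times\{y\}$; combining, for $\pi^-$-a.e.\ $((\gamma,t,x,v),(\gamma',t',x',v'))$ one gets $t=t'$, $x=x'=\gamma_x(t)=\gamma'_x(t')$, $v=v'\in[\gamma_v(t+),\gamma_v(t-)]\cap[\gamma'_v(t-),\gamma'_v(t+)]$, i.e.\ the point lies in $\mathcal{G}$. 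The construction is the standard gluing lemma, so the only points that need care are the explicit Jordan decomposition of $\mu_\gamma$ above and the Borel measurability of the sets $\{(\gamma,y):y\in A^\mp_\gamma\}$ that allows the concentration statement to pass through the disintegration; I do not expect either to be a genuine obstacle.
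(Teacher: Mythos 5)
Your proposal is correct and follows essentially the same route as the paper: disintegrate $\omega_h\otimes\mu^-_\gamma$ and $\omega_e\otimes\mu^+_\gamma$ over their common projection $\mu^-$ and glue the fibre measures to define $\pi^-$, with the concentration on $\mathcal G$ coming from the fact that each factor is concentrated on the corresponding graph-type set. The only difference is that you spell out the Jordan decomposition of $\mu_\gamma$ to justify that $\mu^\mp_\gamma$ is carried by $A^\mp_\gamma$, a point the paper takes as immediate from the definition.
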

\begin{proof}
First we observe that by definition, $\omega_h\otimes \mu^-_\gamma$ is concentrated on the set
\begin{equation*}
\G^-_h:= \{(\gamma,t,x,v) \in \Gamma \times X: \gamma_x(t)=x, v \in [\gamma_v(t+),\gamma_v(t-)]\}
\end{equation*}
and $ \omega_e\otimes \mu^+_\gamma$ is concentrated on the set
\begin{equation*}
\G^+_e:= \{(\gamma,t,x,v) \in \Gamma \times X: \gamma_x(t)=x, v \in [\gamma_v(t-),\gamma_v(t+]\}.
\end{equation*}

Denoting by $p_X: \Gamma \times X \to X$ the standard projection it follows from \eqref{E_mu-} that
\begin{equation*}
(p_X)_\sharp (\omega_h \otimes \mu^-_\gamma) = \mu^- = (p_X)_\sharp (\omega_e \otimes \mu^+_\gamma).
\end{equation*}
By the disintegration theorem (see for example \cite{AFP_book}) there exist two measurable families of probability measures 
$(\mu^{-,h}_{t,x,v})_{(t,x,v) \in X}, (\mu^{+,e}_{t,x,v})_{(t,x,v) \in X} \in \P(\Gamma\times X)$ such that
\begin{equation}\label{E_disintegration}
\omega_h \otimes \mu^-_\gamma = \int_X \mu^{-,h}_{t,x,v} d\mu^- \qquad \mbox{and} \qquad \omega_e \otimes \mu^+_\gamma =  \int_X \mu^{+,e}_{t,x,v} d\mu^-
\end{equation}
and for $\mu^-$-a.e. $(t,x,v)$ the measures $\mu^{-,h}_{t,x,v}$ and $\mu^{+,e}_{t,x,v}$ are concentrated on the set 
\begin{equation*}
p_X^{-1}(\{t,x,v\}) = \{(\gamma,t',x',v')\in \Gamma \times X: t'=t,x'=x,v'=v\}.
\end{equation*}
Moreover, since $\omega_h\otimes \mu^-_\gamma$ is concentrated on the set $\G^-_h$ and $\omega_e\otimes \mu^+_\gamma$ is concentrated on the set $\G^+_e$,
we have that for $\mu^-$-a.e. $(t,x,v)$ the measure $\mu^{-,h}_{t,x,v}$ is concentrated on $p_X^{-1}(\{t,x,v\}) \cap \G^-_h$ and $\mu^{+,e}_{t,x,v}$ is concentrated on 
$p_X^{-1}(\{t,x,v\}) \cap \G^+_e$.
We eventually set
\begin{equation*}
\pi^-:= \int_X \left(\mu^{-,h}_{t,x,v} \otimes \mu^{+,e}_{t,x,v}\right) d\mu^-.
\end{equation*}
From \eqref{E_disintegration} it directly follows \eqref{E_marginals} and by the above discussion for $\mu^-$-a.e. $(t,x,v) \in X$ the measure $\mu^{-,h}_{t,x,v} \otimes \mu^{+,e}_{t,x,v}$
is concentrated on $(p_X^{-1}(\{t,x,v\}) \cap \G^-_h) \times (p_X^{-1}(\{t,x,v\}) \cap \G^+_e)$, therefore $\pi^-$ is concentrated on
\begin{equation*}
\bigcup_{(t,x,v) \in X} (p_X^{-1}(\{t,x,v\}) \cap \G^-_h) \times (p_X^{-1}(\{t,x,v\}) \cap \G^+_e) = \G
\end{equation*}
and this concludes the proof.
\end{proof}

The following elementary lemma is about functions of bounded variation of one variable: we refer to \cite{AFP_book} for the theory of BV functions.
\begin{lemma}\label{L_BV}
Let $v: (a,b)\to \R$ be a $\BV$ function and denote by $D^-v$ be the negative part of the measure $Dv$. Then for $\tilde D^-v$-a.e. $\bar x \in (a,b)$ there exists $\delta >0$ such that
\begin{equation*}
\bar v(x) > \bar v(\bar x) \quad \forall x \in (\bar x - \delta, \bar x) \qquad \mbox{and} \qquad \bar v(x) < \bar v(\bar x) \quad \forall x \in (\bar x, \bar x +  \delta).
\end{equation*}
\end{lemma}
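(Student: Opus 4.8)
The plan is to reduce the statement to a countable decomposition of the measure $\tilde D^-v = D^{\ac,-}v + D^{c,-}v$ (there is no jump part in the diffuse measure) and to exploit that the precise representative $\bar v$ of a one-dimensional $\BV$ function is determined by integrating $Dv$. First I would recall that since $v\in\BV((a,b))$ the precise representative $\bar v$ exists everywhere, is continuous outside the (countable) jump set, and satisfies $\bar v(y)-\bar v(x) = Dv((x,y))$ for all continuity points $x<y$; in fact, using left/right limits, $\bar v(x)-\bar v(\bar x) = \tfrac12\big(Dv([\bar x,x)) + Dv((\bar x,x))\big)$-type formulas hold everywhere, and $\tilde D^-v$ gives no mass to the jump set, so we may assume $\bar x$ is a continuity point of $\bar v$.

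The key step is a density/differentiation argument for the measure $\tilde D^-v$ with respect to the total variation measure of the diffuse part. Write $\tilde Dv = \tilde D^+v - \tilde D^-v$ for the Jordan decomposition of the diffuse part, and let $|\tilde Dv| = \tilde D^+v + \tilde D^-v$ be its total variation. By the Besicovitch differentiation theorem, for $\tilde D^-v$-a.e.\ $\bar x$ the Radon--Nikodym density of $\tilde D^-v$ with respect to $|\tilde Dv|$ equals $1$, equivalently
\begin{equation*}
\lim_{r\to 0}\frac{\tilde D^+v\big((\bar x - r,\bar x + r)\big)}{|\tilde Dv|\big((\bar x - r,\bar x+r)\big)} = 0
\qquad\text{and}\qquad
\lim_{r\to 0}\frac{\tilde D^-v\big((\bar x-r,\bar x+r)\big)}{|\tilde Dv|\big((\bar x-r,\bar x+r)\big)} = 1 .
\end{equation*}
Moreover, again for $\tilde D^-v$-a.e.\ $\bar x$, the atomic part $D^jv$ is negligible near $\bar x$ in the same sense, since $\tilde D^-v \perp D^jv$, so the one-sided total variation of the jump part on $(\bar x-r,\bar x+r)$ is $o\big(|\tilde Dv|((\bar x-r,\bar x+r))\big)$. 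Fixing such a good point $\bar x$ which is also not an atom of $Dv$, on a sufficiently small two-sided neighbourhood the measure $Dv$ is dominated by its negative diffuse part up to a small relative error, hence $Dv$ restricted to $(\bar x-\delta,\bar x)$ and to $(\bar x,\bar x+\delta)$ is a nonzero measure which is "mostly negative"; in particular $Dv\big((\bar x,y)\big)<0$ for all $y\in(\bar x,\bar x+\delta)$ and $Dv\big((x,\bar x)\big)>0$ for all $x\in(\bar x-\delta,\bar x)$, because adding the next increment can only decrease $\bar v$ as we move right and (symmetrically) $\bar v$ was strictly larger just to the left. Translating these inequalities back through $\bar v(y)-\bar v(\bar x) = Dv((\bar x,y))$ (valid since $\bar x$ and the points involved can be taken to be continuity points, the jump set being countable) yields exactly the claimed strict monotonicity on $(\bar x-\delta,\bar x)$ and $(\bar x,\bar x+\delta)$.

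The main obstacle is bookkeeping the exceptional sets so that "for $\tilde D^-v$-a.e.\ $\bar x$" survives all the reductions simultaneously: one must intersect the full-$\tilde D^-v$-measure sets coming from (i) $\bar x\notin$ jump set, (ii) $\bar x\notin$ atoms of $Dv$, (iii) the Besicovitch density of $\tilde D^-v$ w.r.t.\ $|\tilde Dv|$ equalling $1$ at $\bar x$, and (iv) $D^jv$ having vanishing relative density at $\bar x$; each of these is a countable/standard fact, and their intersection still has full $\tilde D^-v$-measure. A secondary subtlety is that $|\tilde Dv|((\bar x-r,\bar x+r))$ must be shown positive for all small $r$, i.e.\ $\bar x$ lies in the support of $\tilde D^-v$; but this holds for $\tilde D^-v$-a.e.\ point trivially since a measure is concentrated on its support. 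Once these points are organized, the argument is the elementary monotonicity observation above, so the proof is short.
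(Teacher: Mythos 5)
Your reduction to continuity points and the bookkeeping of exceptional sets are fine, but the central step does not work: the symmetric density information you extract from the Besicovitch differentiation theorem does not control the one\nobreakdash-sided quantities that actually decide the sign of $\bar v(x)-\bar v(\bar x)$. Writing $Dv=(Dv)^+-(Dv)^-$, for a continuity point $\bar x$ and $y>\bar x$ one has $\bar v(y)-\bar v(\bar x)=Dv((\bar x,y))+\tfrac12 Dv(\{y\})$, so what must be shown is $(Dv)^+((\bar x,y])<(Dv)^-((\bar x,y))$ for \emph{all} small $y-\bar x$. Your hypotheses only give $(Dv)^+((\bar x-r,\bar x+r))=o\big(|\tilde Dv|((\bar x-r,\bar x+r))\big)$, and the quantity on the right may be carried almost entirely by the left half $(\bar x-r,\bar x)$; in that case nothing prevents $(Dv)^+((\bar x,y])$ from exceeding $(Dv)^-((\bar x,y))$. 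Concretely, take $Dv=-\tfrac12|x|^{-1/2}\,\mathcal{L}^1\llcorner(-1,0)-2x\,\mathcal{L}^1\llcorner(0,1)+\sum_n 2n^{-2}\delta_{1/n}$ and $\bar x=0$: every condition on your list (continuity point, no atom, density of $\tilde D^-v$ with respect to $|\tilde Dv|$ equal to $1$, vanishing relative density of $D^jv$, membership in the support) holds at $0$, yet $\bar v(1/n+s)>\bar v(0)$ for every $n$ and all small $s>0$. The sentence ``because adding the next increment can only decrease $\bar v$ as we move right'' is precisely the assertion to be proved, not a justification. (This single point has $\tilde D^-v$-measure zero, so the lemma itself is not contradicted; but your argument claims the conclusion at every point satisfying your conditions, and that claim is false.)

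What is actually needed is a one-sided statement, e.g. $\lim_{h\to0^+}(Dv)^+((\bar x,\bar x+h])/(Dv)^-((\bar x,\bar x+h])=0$ for $(\tilde Dv)^-$-a.e. $\bar x$, together with its mirror image on the left; this does not follow from the centered Besicovitch theorem, since differentiation along the anchored intervals $[\bar x,\bar x+h]$ requires its own covering argument. The standard way to close the gap is the classical one-dimensional differentiation theory for monotone and $\BV$ functions: at $\mathcal{L}^1$-a.e. point $\bar v$ is differentiable with derivative equal to the density of $D^{\ac}v$, which is strictly negative $(D^{\ac}v)^-$-a.e., and by the de la Vall\'ee Poussin theorem the two-sided derivative of $\bar v$ equals $-\infty$ at $(D^{c}v)^-$-a.e. point. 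A strictly negative (possibly infinite) two-sided derivative at $\bar x$ immediately yields $\bar v(\bar x+h)<\bar v(\bar x)$ for all small $h>0$ and $\bar v(\bar x+h)>\bar v(\bar x)$ for all small $h<0$, which is the claim. I would rebuild the proof on that foundation, or else prove the one-sided Vitali-type covering lemma your argument implicitly relies on.
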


The proof of Theorem \ref{T_main} is obtained considering separately the positive and the negative parts of $\mu$; in the following theorem we deal with $\mu^-$. 
\begin{theorem}\label{T_concentration}
The measure $(p_{t,x})_\sharp \mu^-$ is concentrated on the set $J^-$, defined in \eqref{E_def_J-}.
\end{theorem}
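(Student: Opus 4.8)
The plan is to show that, up to an $\omega_h$-negligible set of curves, every atom of the diffuse negative part of $D_t\gamma_v$ lands on one of the curves $f_{\bar t_i,\bar x_j}$, and that these atoms carry all of $\mu^-_\gamma$ in the sense relevant to \eqref{E_mu-}.

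First I would fix a curve $\gamma \in \Gamma_h$ and apply Lemma \ref{L_BV} to the $\BV$ function $t \mapsto \gamma_v(t)$: for $\tilde D^-_t\gamma_v$-a.e. $\bar t$ there is $\delta>0$ with $\gamma_v$ strictly above $\gamma_v(\bar t)$ on $(\bar t-\delta,\bar t)$ and strictly below on $(\bar t,\bar t+\delta)$. The point $(\bar t,\bar x)$ with $\bar x=\gamma_x(\bar t)$ is the candidate location; I want to show it lies on some $C_{f_{\bar t_i,\bar x_j}}$. The mechanism is Corollary \ref{C_no_crossing}: I would pick rationals $\bar t_i < \bar t$ close to $\bar t$ (inside the interval where the strict inequality above holds) and $\bar x_j$ slightly less than $\gamma_x(\bar t_i)$, so that $\gamma \in G^l_{\bar t_i,\bar x_j}$, hence $\gamma_x(t)\le f_{\bar t_i,\bar x_j}(t)$ for all $t\ge \bar t_i$; symmetrically I would use the coupling $\pi^-$ from the previous lemma to produce a curve $\gamma' \in \Gamma_e$ passing through $(\bar t,\bar x)$ with $\gamma'_v$ having the reversed monotonicity, put it in some $G^r_{\bar t_i,\bar x_{j'}}$ with $\bar x_{j'}$ slightly above $\bar x$, and get $\gamma'_x(t)\ge f_{\bar t_i,\bar x_{j'}}(t)$. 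Since $\gamma_x(\bar t)=\bar x=\gamma'_x(\bar t)$ and $\gamma_x \le f \le \gamma'_x$ with $f_{\bar t_i,\bar x_j}$ and $f_{\bar t_i,\bar x_{j'}}$ squeezed in between (choosing $\bar x_j,\bar x_{j'}\to \bar x$ one shows both equal $\bar x$ at time $\bar t$, using that $\gamma'_x-\gamma_x$ cannot have opened up a gap — this is exactly the no-crossing content), we conclude $(\bar t,\bar x)\in J^-$.

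Then I would integrate: by \eqref{E_mu-}, $(p_{t,x})_\sharp\mu^- = \int_\Gamma (p_{t,x})_\sharp\mu^-_\gamma \, d\omega_h(\gamma)$, and for each $\gamma$, $(p_{t,x})_\sharp\mu^-_\gamma$ is, up to the jump parts $\H^1\res E^\pm_\gamma$, the push-forward of $\tilde D^-_t\gamma_v$ under $t\mapsto(t,\gamma_x(t))$. The jump parts contribute to $\mu^-$ only through atoms of $D_t\gamma_v$, which sit at jump times and are handled the same way (or are absorbed because $\mu^-$ restricted to the non-diffuse part is carried by graphs of the $\gamma_x$ themselves, countably many Lipschitz curves — though one must be careful the relevant curves are among the $f$'s; alternatively restrict attention to diffuse parts and note atoms form an $\H^1$-null-ish contribution handled by the remark after Theorem \ref{T_main}). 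Combining the pointwise statement "$\tilde D^-_t\gamma_v$-a.e. point maps into $J^-$" with Tonelli over $\omega_h$ gives $(p_{t,x})_\sharp\mu^-(X\setminus J^-)=0$.

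The main obstacle I anticipate is the squeezing step: extracting from Corollary \ref{C_no_crossing}, which only gives one-sided barriers for \emph{each} rational basepoint, the conclusion that the point $(\bar t,\bar x)$ actually lies \emph{on} one specific $C_{f_{\bar t_i,\bar x_j}}$ rather than merely being trapped between a sup-barrier and an inf-barrier that might not touch. This requires choosing the rational approximations $\bar t_i\uparrow\bar t$ and $\bar x_j\uparrow\bar x$ jointly with the $\gamma'$ produced by $\pi^-$, and using continuity (Lipschitz-ness) of the $f$'s together with the strict monotonicity from Lemma \ref{L_BV} to force $f_{\bar t_i,\bar x_j}(\bar t)=\bar x$ in the limit; care is needed because $f_{\bar t_i,\bar x_j}$ changes with $i,j$, so one likely argues that for suitable fixed $i,j$ (not a limit) one already has $\gamma_x(\bar t)\le f_{\bar t_i,\bar x_j}(\bar t)\le \gamma'_x(\bar t)=\gamma_x(\bar t)$. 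A secondary technical point is ensuring the coupling $\pi^-$ genuinely supplies, for $\tilde D^-_t\gamma_v$-a.e. $(\bar t,\bar x)$ and $\omega_h$-a.e.\ $\gamma$, a partner $\gamma'\in\Gamma_e$ with $v=\gamma_v(\bar t)$ in its value-interval and the reversed one-sided monotonicity of $\gamma'_v$ near $\bar t$ — which should follow by applying Lemma \ref{L_BV} to $\gamma'_v$ and the structure of $\mathcal G$.
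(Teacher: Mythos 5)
Your proposal follows essentially the same route as the paper's proof: couple $\omega_h\otimes\mu^-_\gamma$ with $\omega_e\otimes\mu^+_{\gamma'}$ via $\pi^-$, apply Lemma \ref{L_BV} to $\gamma_v$ and $\gamma'_v$ together with $\dot\gamma_x=\gamma_v$ to obtain $\gamma_x(s)<x-v(t-s)<\gamma'_x(s)$ for $s$ just below $t$, and then trap the common point on a single $C_{f_{\bar t_i,\bar x_j}}$ by choosing one rational basepoint with $\gamma_x(\bar t_i)<\bar x_j<\gamma'_x(\bar t_i)$, which is precisely the resolution of the ``squeezing'' obstacle you flag (the paper phrases this measure-theoretically via the marginals of $\pi^-\llcorner (G^l_{\bar t,\bar x}\times\cdots)\times(G^r_{\bar t,\bar x}\times\cdots)$ and the fact that $\pi^-$ sits on the diagonal in $(t,x)$). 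The one caveat concerns the jump part of $\mu^-_\gamma$: of the options you list, the correct one is ``handled the same way'' (for a negative jump at $t$ and $v\in[\gamma_v(t+),\gamma_v(t-))$ one still gets $\gamma_v(s)>v$ just before $t$, the endpoint set $\{v=\gamma_v(t-)\}$ being $\mu^-_\gamma$-null by countability and non-atomicity), not absorption into a negligible set, since $\H^1\llcorner E^-_\gamma$ is a genuine, non-null piece of $\mu^-$.
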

\begin{proof}
Step 1. For every $(\bar t, \bar x)\in [0,T)\times \R$ we consider the measure
\begin{equation*}
\pi^-_{\bar t, \bar x} := \pi^- \llcorner \big( G_{\bar t, \bar x}^l \times [\bar t, T] \times \R \times [0,1])\times (G_{\bar t, \bar x}^r \times [\bar t, T] \times \R \times [0,1]) \big),
\end{equation*}
where $G_{\bar t, \bar x}^l, G_{\bar t, \bar x}^r$ are defined in \eqref{E_def_G}
and we set
\begin{equation*}
\begin{split}
p^1_{t,x}: (\Gamma\times [0,T]\times \R\times [0,1])^2 & \to [0,T]\times \R. \\
(\gamma,t,x,v,\gamma',t',x',v') & \mapsto (t,x)
\end{split}
\end{equation*}
We prove that the measure $(p^1_{t,x})_\sharp \pi^-_{\bar t, \bar x}$ is concentrated on the graph of $f_{\bar t,\bar x}$, namely $C_{f_{\bar t,\bar x}}$.

First we observe that 
\begin{equation}\label{E_est_Omega+}
\begin{split}
(p^1_{t,x})_\sharp \pi^-_{\bar t, \bar x} \le &~ (p^1_{t,x})_\sharp \big[\pi^- \llcorner \big( (G^l_{\bar t,\bar x} \times [\bar t, T] \times \R \times [0,1])\times (\Gamma \times X) \big) \big]\\
= &~ \omega_h \llcorner G^l_{\bar t,\bar x} \otimes  \mu^-_\gamma\llcorner([\bar t, T]\times \R \times [0,1]).
\end{split}
\end{equation}
Since for $\omega_h$-a.e. $\gamma \in G^l_{\bar t,\bar x}$ it holds $(t,\gamma_x(t))\notin \Omega_{\bar t, \bar x}^+$ for every $t \in [\bar t, T]$, then for 
$\omega_h$-a.e. $\gamma \in G^l_{\bar t,\bar x}$ it holds $\mu_\gamma^-(\Omega_{\bar t, \bar x}^+\times [0,1])=0$ and therefore it follows by \eqref{E_est_Omega+} that
\begin{equation}\label{E_null+}
(p^1_{t,x})_\sharp \pi^-_{\bar t, \bar x} (\Omega_{\bar t,\bar x}^+)=0.
\end{equation}
In the same way we get
\begin{equation}\label{E_null-}
(p^2_{t,x})_\sharp \pi^-_{\bar t, \bar x}(\Omega_{\bar t, \bar x}^-)=0,
\end{equation}
where
\begin{equation*}
\begin{split}
p^2_{t,x}: (\Gamma\times [0,T]\times \R\times [0,1])^2 & \to [0,T]\times \R \\
(\gamma,t,x,v,\gamma',t',x',v') & \mapsto (t',x').
\end{split}
\end{equation*}
Finally, since $\pi^-$ is concentrated on $\G$, then
\begin{equation*}
(p^1_{t,x} \otimes p^2_{t,x})_\sharp \pi^- \in \M(([0,T]\times \R)^2)
\end{equation*}
is concentrated on the graph of the identity on $[0,T]\times \R$ and in particular $(p^1_{t,x})_\sharp \pi^-_{\bar t, \bar x}= (p^2_{t,x})_\sharp \pi^-_{\bar t, \bar x}$.
Therefore it follows from \eqref{E_null+} and \eqref{E_null-} that $(p^1_{t,x})_\sharp \pi^-_{\bar t, \bar x}$ is concentrated on
\begin{equation*}
([0,T]\times \R) \setminus (\Omega_{\bar t,\bar x}^+ \cup \Omega_{\bar t,\bar x}^-)= C_{f_{\bar t,\bar x}}.
\end{equation*}
Step 2. We prove that for $\pi^-$-a.e. pair $(\gamma,t,x,v,\gamma',t',x',v') \in (\gamma\times X)^2$ there exists $\delta >0$ such that
for every $s \in [t-\delta, t)$ it holds $\gamma_x(s)<\gamma'_x(s)$.

By applying Lemma \ref{L_BV} to the $BV$ function $\gamma_v:(\bar t, T)\to [0,1]$ we get that for $(\tilde D\gamma_v)^-$-a.e. $t \in (\bar t, T)$ there exists
$\delta >0$ such that for every $s\in (t-\delta,t)$ it holds $\gamma_v(s)<\gamma_v(t)$. Moreover for every $t$ such that $\gamma_v$ has a negative jump and for every
$v \in [\gamma_v(t+),\gamma_v(t-))$ there exists $\delta>0$ such that for every $s\in (t-\delta,t)$ it holds $\gamma_v(s)<v$.
Since for every $\gamma \in \Gamma$ the measure $\mu^-_\gamma$ has no atoms and the set
\begin{equation*}
E_\gamma:=\{(t,x,v)\in X: \gamma_v(t-) = v > \gamma_v(t+) \}
\end{equation*}
is countable, then $\mu_\gamma^-(E_\gamma)=0$ and in particular
\begin{equation*}
\omega_h\otimes \mu^-_\gamma \big( \{(\gamma,t,x,v) \in \Gamma \times X: \gamma_v(t-)=v>\gamma_v(t+)\}\big)=0.
\end{equation*}
Therefore it follows by the argument above that for $\omega_h\otimes \mu^-_\gamma$-a.e. $(\gamma,t,x,v)$ there exists $\delta_h>0$ such that for every 
$s \in (t-\delta_h,t)$ it holds
\begin{equation*}
\gamma_v(s)>v \qquad \mbox{and hence} \qquad  \gamma_x(s)<\gamma_x(t)- v (t-s)
\end{equation*}
by \eqref{E_characteristic}.
In the same way we obtain that for $\omega_e\otimes \mu^+_\gamma$-a.e. $(\gamma',t',x',v')$ there exists $\delta_e>0$ such that for every $s \in (t'-\delta_e,t')$ it holds
\begin{equation*}
\gamma_x(s)>\gamma_x(t') -v' (t'-s).
\end{equation*}
Finally, since $\pi^-$ is concentrated on $\G$, then for $\pi^-$-a.e. $(\gamma,t,x,v,\gamma',t,x',v')$ we set $\delta = \min(\delta_h,\delta_e)>0$, so that for every $s \in (t-\delta, t)$ it holds 
\begin{equation*}
\gamma_x(s) < \gamma_x(t) -v (t-s)= x - v (t-s) =x' - v' (t'-s) \gamma'_x(t')-v'(t'-s)< \gamma'_x(s)
\end{equation*}
and this concludes Step 2.

By Step 2 it follows that for $\pi^-$-a.e. $(\gamma,t,x,v,\gamma',t,x',v')$ there exists $(\bar t_i, \bar x_j) \in ([0,T]\cap \Q)\times \Q$ such that 
\begin{equation*}
(\gamma,t,x,v,\gamma',t,x',v') \in  (G_{\bar t_i, \bar x_j}^l \times [\bar t_i, T] \times \R \times [0,1])\times (G_{\bar t_i, \bar x_j}^r \times [\bar t_i, T] \times \R \times [0,1])
\end{equation*}
so that $\pi^-$ is concentrated on
\begin{equation*}
\bigcup_{i,j\in \N} (G_{\bar t_i, \bar x_j}^l \times [\bar t_i, T] \times \R \times [0,1])\times (G_{\bar t_i, \bar x_j}^r \times [\bar t_i, T] \times \R \times [0,1]).
\end{equation*}
Therefore by Step 1 it follows that $(p^1_{t,x})_\sharp \pi^-$ is concentrated on
\begin{equation*}
\bigcup_{i,j\in \N}C_{f_{\bar t_i,\bar x_j}}=J^-.
\end{equation*}
We finally observe that from \eqref{E_marginals} and \eqref{E_mu-} it follows that
\begin{equation*}
(p^1_{t,x})_\sharp \pi^- = (p_{t,x})_\sharp((p_1)_\sharp \pi^-) =  (p_{t,x})_\sharp (\omega_h\otimes \mu^-_\gamma) = (p_{t,x})_\sharp \mu^-
\end{equation*}
and therefore the proof is completed.
\end{proof}

The analogous statement for the positive part of $\mu$ follows from the observation that reversing the direction of time turns entropy solutions into anti-entropic solutions.

\begin{proposition}\label{P_mu+}
The measure $(p_{t,x})_\sharp \mu^+$ is concentrated on a 1-rectifiable set $J^+$.
\end{proposition}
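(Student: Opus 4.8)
The plan is to reduce the statement for $\mu^+$ to the already-established Theorem \ref{T_concentration} for $\mu^-$ by exploiting the time-reversal symmetry of the Burgers equation. First I would introduce the reflection operator $R:[0,T]\times\R\to[0,T]\times\R$, $R(t,x)=(T-t,x)$, and set $\hat u(t,x):=u(T-t,x)$. A direct computation shows that $\hat u$ solves $\hat u_t-(\hat u^2/2)_x=0$, i.e. $\hat u$ is a bounded weak solution with finite entropy production to the Burgers equation with the \emph{opposite} flux $\hat f(z)=-z^2/2$; equivalently, after the further change $\hat u\mapsto 1-\hat u$ (as in the discussion following Proposition \ref{P_Lagrangian}) one lands again on a uniformly convex flux. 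The key point is that under time reversal every entropy flux changes sign, so the kinetic measure $\hat\mu$ of $\hat u$ satisfies $\hat\mu=-R_\sharp\mu$ (at the level of the $(t,x)$-projections this reads $(p_{t,x})_\sharp\hat\mu=-R_\sharp(p_{t,x})_\sharp\mu$), whence $(p_{t,x})_\sharp\hat\mu^-=R_\sharp(p_{t,x})_\sharp\mu^+$. Since $R$ is a bi-Lipschitz bijection, it maps $1$-rectifiable sets to $1$-rectifiable sets, and concentration is preserved under pushforward.

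Concretely, the remark after Proposition \ref{P_Lagrangian} already explains how to produce Lagrangian representations for $1-u$ and for fluxes of the form $g(z)=-(z-1)^2/2$; the same construction, applied to $\hat u$ (or to $1-\hat u$), yields Lagrangian representations $\hat\omega_h,\hat\omega_e$ of its hypograph and epigraph together with the decomposition and minimality identities \eqref{E_decomposition_mu}--\eqref{E_mu-}. All the intermediate results of Section 2 (Lemma \ref{L_trace}, Proposition \ref{P_h-e}, Corollary \ref{C_no_crossing}) and the construction of the candidate set $J^-$ in \eqref{E_def_J-} go through verbatim for $\hat u$ because they only use the structure of $\Gamma$, the characteristic ODE \eqref{E_characteristic} (with $\dot\gamma_x=\gamma_v\ge 0$ still holding, or $\dot\gamma_x\le 0$ if one keeps the sign convention of $\hat f$ — in either case the monotonicity of $\gamma_x$ is what matters), and the trace lemma. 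Applying Theorem \ref{T_concentration} to $\hat u$ then gives a $1$-rectifiable set $\hat J^-$, a countable union of graphs of Lipschitz curves, on which $(p_{t,x})_\sharp\hat\mu^-$ is concentrated.

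It then remains to set $J^+:=R(\hat J^-)$ and to check the measure identity $(p_{t,x})_\sharp\hat\mu^-=R_\sharp\big((p_{t,x})_\sharp\mu^+\big)$. This is the only genuinely computational point, and I expect it to be the main (minor) obstacle: one must verify that time reversal sends the entropy dissipation $\mu_\eta=\partial_t\eta(u)+\partial_x q(u)$ of $u$ to $-\mu_{\eta}$ of $\hat u$ (with the reversed flux), hence $\le 0$ dissipation becomes $\ge 0$ production and $\mu^\pm$ are interchanged, and then to trace this through the kinetic formulation \eqref{E_all_entropies} and the definition \eqref{E_nu} of $\nu$. Once this bookkeeping is done, $R(\hat J^-)$ is a countable union of graphs of Lipschitz curves (the graph of $s\mapsto\hat\gamma_i(T-s)$), hence $1$-rectifiable, and $(p_{t,x})_\sharp\mu^+=R^{-1}_\sharp\big((p_{t,x})_\sharp\hat\mu^-\big)$ is concentrated on it, which is the claim. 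Combining Theorem \ref{T_concentration} and Proposition \ref{P_mu+} and recalling $|\mu|=\mu^++\mu^-$ and $\nu=(p_{t,x})_\sharp|\mu|$ then yields Theorem \ref{T_main}.
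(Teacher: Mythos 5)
Your overall strategy is the same as the paper's: a reflection swaps $\mu^+$ with the negative part of the kinetic measure of a transformed solution, and Theorem \ref{T_concentration} then applies. The one substantive difference is the choice of reflection. The paper uses $R(t,x)=(T-t,-x)$ and $\bar R(t,x,v)=(T-t,-x,v)$, i.e.\ it reverses \emph{both} time and space; a direct computation on the kinetic equation then shows that $\tilde\chi=\chi\circ\bar R$ solves $\partial_t\tilde\chi+v\partial_x\tilde\chi=\partial_v\tilde\mu$ with $\tilde\mu=-\bar R_\sharp\mu$, so $\tilde u=u\circ R$ is again a bounded weak solution with finite entropy production of the \emph{same} Burgers equation, and Theorem \ref{T_concentration} applies verbatim with no further discussion. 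Your time-only reversal $R(t,x)=(T-t,x)$ instead lands on the concave-flux equation $\hat u_t-(\hat u^2/2)_x=0$, and this is where your writeup is loose: the claim that Proposition \ref{P_h-e}, Corollary \ref{C_no_crossing} and the construction of $J^-$ ``go through verbatim'' is not literally true for that flux. Under a concave flux the characteristic speed is $-\gamma_v\le 0$, so the roles of hypograph/epigraph and of the left/right families $G^{l}_{\bar t,\bar x}$, $G^{r}_{\bar t,\bar x}$ in \eqref{E_def_G} must be swapped, and the degenerate case of Corollary \ref{C_no_crossing} uses the sign of $\dot\gamma_x$ explicitly; saying ``the monotonicity of $\gamma_x$ is what matters'' glosses over exactly the bookkeeping that the extra reflection $x\mapsto -x$ is designed to eliminate. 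Your alternative fix via $\hat u\mapsto 1-\hat u$ also does not land back on the flux $v^2/2$ but on $(z-1)^2/2$, so it still requires invoking the (unproved in this paper, though announced) extension to general uniformly convex fluxes, or an additional Galilean shear. None of this is a fatal gap --- each repair is routine --- but the cleanest route, and the one the paper takes, is simply to reflect the space variable as well, after which nothing needs to be re-examined. Your final identities $\hat\mu^-=\hat R_\sharp\mu^+$ and $J^+:=R(\hat J^-)$, and the observation that a bi-Lipschitz image of a countable union of Lipschitz graphs is $1$-rectifiable, match the paper's conclusion.
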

\begin{proof}
We consider the two reflections $R:[0,T]\times \R \to [0,T]\times \R$ and $\bar R:X \to X$ defined by
\begin{equation*}
R(t,x)=(T-t, -x) \qquad \mbox{and} \qquad \bar R(t,x,v)=(T-t,-x,v).
\end{equation*}
Observe that $R^{-1}=R$ and $\bar R^{-1}=\bar R$. From the kinetic formulation for $u$:
\begin{equation*}
\chi (t,x,v):= 
\begin{cases} 
1 & \mbox{if }u(t,x)>v \\
0 & \mbox{otherwise}
\end{cases}, \qquad
\partial_t\chi + v \partial_x \chi = \partial_v \mu
\end{equation*}
we deduce that $\tilde \chi:= \chi \circ \bar R$ satisfies
\begin{equation}\label{E_tilde}
\partial_t \tilde \chi + v \partial_x \tilde \chi = \partial_v \tilde \mu, \qquad \mbox{with} \qquad \tilde \mu = - \bar R_\sharp \mu
\end{equation}
and it is the kinetic formulation of
\begin{equation*}
\tilde u(t,x)= \int_0^1\tilde \chi(t,x,v)dv = (u\circ R) (t,x).
\end{equation*}
In particular $\tilde u$ is a weak solution with finite entropy production to the Burgers equation \eqref{E_Burgers} and by Theorem \ref{T_concentration} 
the measure $(\pi_{t,x})_\sharp \tilde \mu^-$ is concentrated on a 1-rectifiable set $\tilde J^-$.
From \eqref{E_tilde} it follows that $\mu^+ = \bar R_\sharp \tilde \mu^- $ and therefore that $(p_{t,x})_\sharp \mu^+$ is concentrated on $J^+:=R(\tilde J^-)$, which is obviously 1-rectifiable.
\end{proof}

Theorem \ref{T_main} immediately follows from Theorem \ref{T_concentration} and Proposition \ref{P_mu+} setting
\begin{equation*}
J'=J^-\cup J^+.
\end{equation*}

We conclude by proving Theorem \ref{T_one_entropy}, which essentially follows from the observation that every negative shock of $u$ is entropic and every positive shock is anti-entropic.
\begin{proof}
By \eqref{E_all_entropies} it follows that
\begin{equation*}
|(p_{t,x})_\sharp \mu| = \mu_{\bar \eta} \vee \mu_{-\bar \eta},
\end{equation*}
where $\bar \eta (v)=v^2/2$. Therefore in order to prove the theorem, it is sufficient to check that for $\nu$-a.e. $(t,x)\in [0,T]\times \R$ the supremum in \eqref{E_nu} is attained at $\bar \eta$ or at $-\bar \eta$.
Let us denote by $\tilde J^+\subset J$ the set of points for which $u^-<u^+$ and by $\tilde J^-\subset J$ the set of points for which $u^+<u^-$.
In view of Theorem \ref{T_main} and \eqref{E_rect} it holds $\nu (([0,T]\times \R) \setminus (\tilde J^+\cup \tilde J^-))=0$.
We claim that
\begin{equation}\label{E_no_canc}
\nu\llcorner \tilde J^+ = \mu_{\bar \eta} \qquad \mbox{and} \qquad \nu\llcorner \tilde J^- = \mu_{-\bar \eta}.
\end{equation}
We consider the second equality:
by the Rankine-Hugoniot conditions the normal to $J$ in \eqref{E_rect} is determined for $\H^1$-a.e. $(t,x) \in J$ by
\begin{equation*}
\n = (\n_t,\n_x) = \frac{1}{\sqrt{1+\lambda^2}}(\lambda, -1), \qquad \mbox{where} \qquad \lambda = \frac{u^-+u^+}{2}.
\end{equation*}
In particular the density in \eqref{E_rect} takes the form
\begin{equation*}
\begin{split}
\frac{1}{\sqrt{1+\lambda^2}}\big(q(u^-)-q(u^+) - \lambda(\eta(u^-)-\eta(u^+))\big) = &~ \frac{1}{\sqrt{1+\lambda^2}}\int_{u^+}^{u^-}(q'(v) - \lambda \eta'(v))dv \\
= &~  \frac{1}{\sqrt{1+\lambda^2}}\int_{u^+}^{u^-}(v-\lambda)\eta'(v)dv \\
= &~ \left.\frac{(v-\lambda)^2}{2\sqrt{1+\lambda^2}}\right|_{u^+}^{u^-}- \frac{1}{\sqrt{1+\lambda^2}}\int_{u^+}^{u^-}\frac{(v-\lambda)^2}{2} \eta''(v) dv,
\end{split}
\end{equation*} 
which is maximized by $\eta= - \bar \eta$ in the set $\{\eta \in C^2(\R): \|\eta''\|_{C^0}\le 1\}$.
Being the first inequality of \eqref{E_no_canc} completely analogous, this concludes the proof.
\end{proof}

\end{document}